\newtheorem{thm}{Theorem}[section]
\newtheorem{prop}[thm]{Proposition}
\theoremstyle{definition}
\newtheorem{defn}[thm]{Definition}
\theoremstyle{remark}
\newtheorem{rmk}[thm]{Remark}
\numberwithin{equation}{section}
\newcommand{\norm}[1]{\left\Vert#1\right\Vert}
\newcommand{\abs}[1]{\left\vert#1\right\vert}
\newcommand{\Real}{\mathbb R}
\newcommand{\eps}{\varepsilon}
\begin{document}
\title{Incompressible Boussinesq Equations and Borderline Besov Spaces}
\author{Jacob Glenn-Levin}
\address{Department of Mathematics, 1 University Station C1200, Austin, TX 78712-0257}
\email{jglennlevin@math.utexas.edu}
\subjclass{35Q35, 76B03}
\begin{abstract}
We prove local-in-time existence and uniqueness of an inviscid Boussinesq-type system. We assume the density equation contains nonzero diffusion and that our initial vorticity and density belong to a space of borderline Besov type.
\end{abstract}
% ----------------------------------------------------------------
\maketitle
% ----------------------------------------------------------------
\section{Introduction}\label{IntSec}
Consider the two dimensional Boussinesq system given by
\begin{center}
$(B_{\kappa, \nu}) \left\{\begin{array}{lc}\partial_t u + (u, \nabla) u - \nu\Delta u + \nabla P = \left(
                                                                               \begin{array}{c}
                                                                                 0\\
                                                                                  \rho \\
                                                                               \end{array}
                                                                             \right)\\
\partial_t \rho + (u, \nabla)\rho = \kappa \Delta \rho\\
\text{div } u = 0 \\
u(x,0) = u_0(x),\;\; \rho(x,0) = \rho_0(x).\\
\end{array}\right.$
\vspace{10pt}
\end{center}
Where $u=(u_1, u_2)$ is the velocity field, $\rho$ is the scalar density and $P$ is the pressure. This system is used to model, among other things, the influence of gravitational forces on the motion of a lighter or denser fluid. As such, it has relevance in the study of atmospheric and oceanographic dynamics and turbulence where rotation or stratification occurs (cf. ~\cite{MR1965452}). Of mathematical interest, note that for $\rho \equiv 0$ we recover the incompressible Navier-Stokes and Euler Equations (for $\nu > 0$ and $\nu \equiv 0$, respectively). Furthermore, the system $(B_{0,0})$ has vortex stretching similiar to that of the of 3D axisymmetric flow, thus providing a model under which to approach the formation of finite time singularities (see ~\cite{MR1867882} for discussion of this relationship). While there has been some numerical study of finite time singularity for $(B_{0,0})$, the results remain inconclusive ~\cite{MR1252833,MR1167779}.

From an existence and regularity perspective, much progress has been made recently. When both $\kappa$ and $\nu$ are strictly positive, ~\cite{MR565993,MR1013438} give global existence of smooth solutions using standard energy method arguments. In the case of $\kappa \equiv \nu \equiv 0$, local well-posedness as well as a blow-up criterion similar to well-known result of Beale, Kato and Majda ~\cite{MR763762} has been shown in a number of function spaces ~\cite{MR1711383,MR1475638,MR2645152}. For $\kappa \equiv 0$, $\nu > 0$, ~\cite{MR2239910} shows global existence of a solution to $(B_{0,\nu})$ for nondecaying initial data: $(u_0, \rho_0) \in \mathrm{L}^\infty (\Real^2) \times \dot{B}^0_{\infty, 1}(\Real^2)$.

In this paper, we study the case of $\nu \equiv 0, \kappa > 0$. Recall that in the celebrated paper ~\cite{MR0158189}, V. Yudovich proves the existence and uniqueness of a weak solution to the 2D incompressible Euler Equations,
\begin{center}
(E) $\left\{\begin{array}{lc}\partial_t u + (u, \nabla) u + \nabla P = 0\\
\text{div } u = 0 \\
u(x,0) = u_0(x),
\end{array}\right.$

\end{center}
for initial vorticity, $\omega_0 = \text{curl } u_0 \in \mathrm{L}^\infty(\Real^2)$. This result has been extended for initial vorticity in more general function spaces including, among others, Besov spaces. Recall that the (inhomogeneous) Besov Space, $B^s_{p,q}$, can be characterized as the set of all $f \in \mathcal{S}'$ such that
\begin{equation*}
\left(\sum_{j=-1}^\infty 2^{jqs}\norm{\Delta_j f}_p^q\right)^{\frac{1}{q}} < \infty,
\end{equation*}
where $s\in \Real$, $p,q\in [1,\infty]$ and $\Delta_j$ is the Littlewood-Paley operator defined below. In ~\cite{MR1717576}, M. Vishik proves the existence and uniqueness of a local-in-time solution to the incompressible 2D Euler Equations with initial data in a critical Besov-type Space (critical in the sense $s = n/p$ - see ~\cite{MR1851996} for a thorough discussion of critical, sub-critical and super-critical Besov Spaces). He introduces the $B_\Gamma$ spaces, based on the $B^0_{\infty, 1}$-norm, such that
\begin{equation*}
B_\Gamma = \left\{ f \in \mathcal{S}'(\Real^n) \vert \sum_{j=-1}^N \norm{\Delta_j f}_\infty = O(\Gamma(N))\right\},
\end{equation*}
and shows that for initial data in such a space, sufficient control on the growth of $\Gamma$ as $N\rightarrow \infty$ gives existence in a slightly weaker $B_\Gamma$-type space for positive time. More recently, R. Danchin and M. Paicu in ~\cite{MR2520505} prove a Yudovich-type result in $\Real^2$ for $(B_{\kappa, 0})$ under the assumption that $u_0 \in \mathrm{L}^2$, $\omega_0 \in \mathrm{L}^r \cap \mathrm{L}^\infty$ ($r\geq 2$) and $\rho_0 \in \mathrm{L}^2 \cap B^{-1}_{\infty, 1}$. In this paper, we seek to relax the boundedness constraints on the initial data and replace them with the more general membership in $B_\Gamma$ for suitable choice of $\Gamma$.

\begin{rmk}
Note that for $\Gamma$ such that $\Gamma(\alpha) =  C\alpha$  when  $\alpha \geq 1$, $\Gamma(\alpha) = 1$ otherwise, we have $\mathrm{L}^\infty \subset B_\Gamma$, thus the two spaces are comparable for such a choice of $\Gamma$.
\end{rmk}

The structure of the paper is as follows: In the remainder of this section we define more precisely some concepts we will need throughout the paper. In section \ref{APSec}, we prove a priori estimates for vorticity and density, generalizing the results found in ~\cite{MR1717576} and ~\cite{MR2520505} as well as a technical commutator estimate needed in the remainder of the paper. Finally, in sections \ref{USec} and \ref{ExSec} we prove the uniqueness and existence of these solutions, respectively.

In order to define the $B_\Gamma$ spaces discussed in this paper, we first need to define the Littlewood-Paley decomposition. Let $\Phi \in \mathcal{S}(\Real^n)$ be a function such that\\
supp $\hat{\Phi} \subset \{\xi \in \Real^n\;|\; \abs{\xi} \leq 1\}$ and $|\hat{\Phi}(\xi)| \geq C > 0$ on $\{\abs{\xi} \leq \frac{5}{6}\}$. Let $\varphi \in \mathcal{S}(\Real^n)$ be a radial function such that supp $\hat{\varphi} \subset \{\xi \in \Real^n \;|\; \frac{1}{2} \leq |\xi| \leq 2\}$, $|\hat{\varphi}(\xi)| \geq C > 0$ on $\{\frac{3}{5} \leq \abs{\xi} \leq \frac{5}{3}\}$. Set $\varphi_j(x) = 2^{jn}\varphi(2^j x)$ for $j \in \mathbb{Z}$ (i.e., $\hat{\varphi}_j(\xi) = \hat{\varphi}(2^{-j}\xi))$. Based on this choice of $\Phi$ and $\varphi$, the Littlewood-Paley operators are then given by:
\begin{defn}
 For $f \in \mathcal{S}'(\Real^n)$, we have
\begin{align*}
&\Delta_{-1} f = \hat{\Phi}\left(\frac{1}{i}\frac{\partial}{\partial x}\right) f = \mathcal{F}^{-1}(\hat{\Phi} \cdot \hat{f}) = \Phi \star f,\\
&\Delta_j f = \hat{\varphi}_j \left(\frac{1}{i}\frac{\partial}{\partial x}\right) f = \mathcal{F}^{-1}(\hat{\varphi} \cdot \hat{f}) = \varphi_j \star f \hspace{10pt}\text{for j} \geq 0,\\
&\Delta_j f = 0 \hspace{10pt}\text{for j} \leq -2\\
&S_j f = \sum_{k=-1}^j \Delta_k f.
\end{align*}
\end{defn}
\noindent From the above, we define the Littlewood-Paley decomposition of $f \in \mathcal{S}'$ as
\begin{equation*}
f = \sum_{j=-1}^\infty \Delta_j f.
\end{equation*}

\begin{defn} The space $B_\Gamma$ is the set of all $f\in \mathcal{S}'$ such that for any $N \geq -1$:
\begin{equation*}
\sum_{j=-1}^N \norm{\Delta_j f}_\infty \leq C \Gamma(N), \hspace{10pt}\text{where } \norm{f}_\Gamma = \sup_{N \geq -1} (\Gamma(N))^{-1} \sum_{j=-1}^N \norm{\Delta_j f}_\infty.
\end{equation*}
\end{defn}
For the purposes of this paper, let $\Gamma:\Real \rightarrow [1,\infty)$ satisfy the following conditions:
\renewcommand{\theenumi}{\roman{enumi}}
\begin{enumerate}
  \item  $\Gamma(\alpha) = 1$ for $\alpha \in (-\infty, -1], \lim_{\alpha \rightarrow \infty} \Gamma(\alpha) = \infty$
  \item  There is a constant $C > 0$ such that $C^{-1}\Gamma(\beta) \leq \Gamma(\alpha) \leq C\Gamma(\beta)$ for  $\alpha,\beta \in [-1, \infty), \\|\alpha - \beta| \leq 1.$
  \item There is a constant $C > 0$ such that for $\alpha \in [-1,\infty),$
  \[ C2^{-\alpha}\Gamma(\alpha) \geq \int_\alpha^\infty 2^{-\xi}\Gamma(\xi)\text{d}\xi. \]
\end{enumerate}
Define $\Gamma_1(\alpha) = (\alpha +2)\Gamma(\alpha)$ for $\alpha \geq -1$, $\Gamma_1(\alpha) = 1$ otherwise and assume:
\begin{enumerate}
  \setcounter{enumi}{3}
    \item $\Gamma_1$ satisfies (iii),
    \item $\Gamma_1$ is convex,
    \item $\int_1^\infty (\Gamma_1(\alpha))^{-1}\text{d}\alpha = \infty$.
\end{enumerate}

\section{A Priori and Commutator Estimates}\label{APSec}
Let $\mathcal{K}$ be the Biot-Savart kernel. Denote by $X_u(x,t;\tau) := X(x,t;\tau)$ the flow mapping generated by $u$. $X$ solves 
\[
\frac{\partial}{\partial t}X(x,t;\tau) = u(X(x,t;\tau),t)\; ;\;\;\; X(x,0;\tau)=x(\tau)
\] 
and for any $0\leq \tau \leq t \leq T$, $X(\cdot, t;\tau)$ is a volume preserving homeomorphism $\Real^2 \stackrel{\text{onto}}{\longrightarrow} \Real^2$. The goal for this section is to prove the following a priori bound for $\omega$:

\begin{thm}\label{apriorithm}
Let $1 < p_0 < 2 < p_1 < \infty$. Let $f \in B_\Gamma \cap \mathrm{L}^{p_0} \cap \mathrm{L}^{p_1}$, and let $g \in \mathrm{W}^{1,p_0} \cap \mathrm{W}^{1,p_1}$ such that $\nabla g \in B_\Gamma$. Let  $u = \mathcal{K} * \omega$, $\omega_0 = f$ and $\rho_0 = g$. Finally, let $(u,\rho)$ solve ($B_{\kappa, 0}$). then we have the following a priori estimates:
\renewcommand{\labelenumi}{\arabic{enumi}.}
\begin{enumerate}
  \item There exists $T > 0$ (depending on $\Gamma$, $f$ and $g$) such that $\omega, \nabla \rho \in \mathrm{L}^\infty([0,T]; \mathrm{L}^{p_0} \cap \mathrm{L}^{p_1}), \nabla \rho \in \mathrm{L}^\infty([0,T]; B_{\Gamma}),$ and $\omega \in \mathrm{L}^\infty([0,T]; B_{\Gamma_1})$
when \begin{equation}\label{Gamma1}
      (\alpha +2)\Gamma'(\alpha) \leq C \text{ for a.e. } \alpha \in [-1,\infty).
     \end{equation}

  \item Furthermore, $\omega, \nabla \rho \in \mathrm{L}^\infty_{\text{loc}}([0,\infty); \mathrm{L}^{p_0} \cap \mathrm{L}^{p_1}), \nabla \rho \in \mathrm{L}^\infty_{\text{loc}}([0,\infty); B_{\Gamma})$,\\
   and $\omega \in \mathrm{L}^\infty_{\text{loc}}([0,\infty); B_{\Gamma_1})$ under the stronger assumption that \begin{equation}\label{Gamma2}
                     \Gamma'(\alpha)\Gamma_1(\alpha)\leq C \text{ for a.e. } \alpha \in [-1,\infty).
                    \end{equation}
\end{enumerate}
\end{thm}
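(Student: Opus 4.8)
The plan is to pass to the vorticity and density-gradient formulations, derive frequency-localized estimates via the flow map $X$ and the heat semigroup, and close a coupled Osgood--Gronwall inequality using the structural hypotheses (i)--(vi). \textbf{Reduction.} In two dimensions the scalar curl of the momentum equation annihilates the pressure and turns advection into transport, giving the forced, \emph{non-diffusive} vorticity equation
\[ \partial_t\omega+(u,\nabla)\omega=\partial_1\rho. \]
Writing $\theta=\nabla\rho$ and differentiating the density equation yields the transport--diffusion system
\[ \partial_t\theta+(u,\nabla)\theta+(\nabla u)\theta=\kappa\Delta\theta, \]
with a zeroth-order coupling $(\nabla u)\theta$. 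Throughout $u=\mathcal{K}*\omega$, so $\nabla u$ is a Calder\'on--Zygmund transform of $\omega$: hence $\norm{\nabla u}_{p_0},\norm{\nabla u}_{p_1}\lesssim\norm{\omega}_{p_0}+\norm{\omega}_{p_1}$, each block obeys $\norm{\Delta_j\nabla u}_\infty\approx\norm{\Delta_j\omega}_\infty$, and the partial sums satisfy $\sum_{j=-1}^N\norm{\Delta_j\nabla u}_\infty\lesssim\norm{\omega}_{\Gamma_1}\Gamma_1(N)$. I stress that $\nabla u\notin L^\infty$ in general, because $\Gamma_1(N)\to\infty$; circumventing this is the borderline difficulty driving the whole proof.

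\textbf{Step 1 (the $L^{p_0}\cap L^{p_1}$ bounds).} Since $\operatorname{div}u=0$ the flow is volume preserving and transport is an $L^p$-isometry, so $\tfrac{d}{dt}\norm{\omega}_p\le\norm{\theta}_p$. Testing the $\theta$-equation against $\abs{\theta}^{p-2}\theta$ produces dissipation from $\kappa\Delta$ and a coupling term bounded by $\int\abs{\nabla u}\abs{\theta}^p$. As $\nabla u\in L^{p_1}$ but not $L^\infty$, I would estimate this by H\"older followed by a two-dimensional Gagliardo--Nirenberg interpolation of $\norm{\,\abs{\theta}^{p/2}}_{\dot H^1}$, absorbing the top-order factor into the dissipation with Young's inequality. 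This is exactly where the spread $1<p_0<2<p_1$ enters: $p_1>2$ makes the interpolation exponent subcritical so the top-order term is absorbable, while $p_0<2$ gives $u\in L^q$ through Biot--Savart. The $L^p$ bounds thereby close among themselves.

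\textbf{Step 2 (density gradient in $B_\Gamma$).} Applying $\Delta_j$ and integrating the localized equation along $X$ with heat-kernel damping gives
\[ \norm{\Delta_j\theta(t)}_\infty\lesssim e^{-c\kappa 2^{2j}t}\norm{\Delta_j\theta_0}_\infty+\int_0^t e^{-c\kappa 2^{2j}(t-s)}\Bigl(\norm{\Delta_j((\nabla u)\theta)}_\infty+\norm{[(u,\nabla),\Delta_j]\theta}_\infty\Bigr)ds. \]
I would expand $(\nabla u)\theta$ by Bony paraproducts, estimate the commutator by the lemma of Section~\ref{APSec}, sum over $-1\le j\le N$ and divide by $\Gamma(N)$. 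The damping is indispensable here -- it renders the high-frequency sums convergent -- and with the tail bound (iii) (equivalently $\sum_{k\ge j}2^{-(k-j)}\Gamma(k)\lesssim\Gamma(j)$) and the slow variation (ii) it yields $\norm{\theta(t)}_\Gamma\lesssim\norm{\theta_0}_\Gamma+\int_0^t M(s)\norm{\theta(s)}_\Gamma\,ds$, with $M$ the log-Lipschitz modulus of $u$ built from $\norm{\omega}_{\Gamma_1}$.

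\textbf{Step 3 (vorticity in $B_{\Gamma_1}$ and closure).} With no diffusion available I integrate $\Delta_j\omega$ along $X$ directly,
\[ \norm{\Delta_j\omega(t)}_\infty\le\norm{\Delta_j\omega_0}_\infty+\int_0^t\norm{\Delta_j\partial_1\rho}_\infty\,ds+\int_0^t\norm{[(u,\nabla),\Delta_j]\omega}_\infty\,ds. \]
Summed over $j\le N$, the forcing contributes $\lesssim\Gamma(N)\int_0^t\norm{\nabla\rho}_\Gamma\,ds$ (controlled by Step~2), whereas the commutator loses an extra factor $(N+2)$ from the borderline logarithmic defect -- precisely why $\omega$ must be measured on the scale $\Gamma_1(\alpha)=(\alpha+2)\Gamma(\alpha)$ rather than $\Gamma$. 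Feeding Steps~1--3 into one functional and using the convexity (v), the regularity (iv) of $\Gamma_1$, and the Osgood condition (vi) $\int_1^\infty\Gamma_1^{-1}=\infty$ converts the coupled estimate into an Osgood inequality: hypothesis~\eqref{Gamma1} keeps its right side on the $\Gamma_1$ scale and yields the local time $T$, while the stronger~\eqref{Gamma2} gives bounds on each compact interval, i.e.\ the $L^\infty_{\mathrm{loc}}([0,\infty))$ statement. \textbf{Main obstacle.} The crux is the Section~\ref{APSec} commutator estimate: bounding $\norm{[(u,\nabla),\Delta_j]f}_\infty$ when $\nabla u\notin L^\infty$ and extracting \emph{exactly} one power of $(\alpha+2)$ upon summation, so that the logarithmic loss is absorbed cleanly by the passage from $\Gamma$ to $\Gamma_1$ and remains compatible with (ii)--(vi); any worse loss would break the bootstrap.
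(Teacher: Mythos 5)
Your outline has the right flavor (heat smoothing for the density, pure transport for the vorticity, a $\Gamma\to\Gamma_1$ loss somewhere), but it diverges from the paper at the two places where the argument actually has to close, and at both places you leave the decisive step unproven. First, the $\Gamma\to\Gamma_1$ loss. In the paper this is \emph{not} produced by a commutator estimate at all: the vorticity is integrated along characteristics without frequency localization, giving $\norm{\omega(t)}_{\Gamma_1}\leq\norm{\omega_0\circ X^{-1}}_{\Gamma_1}+\int_0^t\norm{\nabla\rho\circ X^{-1}}_{\Gamma_1}$, and the entire content of the loss is Vishik's Proposition \ref{vprop1}: composition with $X_u^{-1}$ maps $B_\Gamma$ boundedly into $B_{\Gamma_1}$, on a small time interval under \eqref{Gamma1} and globally (with a factor $\lambda(t)$) under \eqref{Gamma2}. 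That proposition is precisely where hypotheses \eqref{Gamma1} and \eqref{Gamma2} are consumed. Your replacement --- localize the transport equation, estimate $[(u,\nabla),\Delta_j]\omega$, and show the summed commutator loses \emph{exactly} one factor of $(N+2)$ --- is the hard part of Vishik's own proof, and you explicitly defer it as ``the main obstacle.'' As written, the proposal identifies the crux but does not supply it.

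Second, and more seriously, your Step 2 would likely not land in $B_\Gamma$. If you treat $\theta=\nabla\rho$ by the same localized-transport-plus-commutator scheme as $\omega$, the same logarithmic defect appears, and $\nabla\rho$ would be pushed to $B_{\Gamma_1}$; then \eqref{eq1.0g} would force $\omega$ into $B_{\Gamma_2}$ with $\Gamma_2(\alpha)=(\alpha+2)^2\Gamma(\alpha)$, and the bootstrap never closes. The paper avoids this by never differentiating the density equation: it measures the forcing $h=-(u,\nabla)\rho$ at regularity $-1$, using Chemin's semigroup bound to get the two-derivative gain $\kappa\int_0^t\norm{\rho}_{B^1_{p,1}}\lesssim(1+\kappa t)\bigl(\norm{\rho_0}_{B^{-1}_{p,1}}+\int_0^t\norm{(u,\nabla)\rho}_{B^{-1}_{p,1}}\bigr)$, and then Bony decomposition gives $\norm{(u,\nabla)\rho}_{B^{-1}_{\infty,1}}\lesssim\norm{\rho}_{B^0_{\infty,\infty}}\norm{u}_{B^0_{\infty,1}}$ with $\norm{u}_{B^0_{\infty,1}}\lesssim\norm{\omega}_{p_0}+\norm{\omega}_{\Gamma_1}$ --- the factor $2^{-j}$ from the negative index makes the full sum over $j$ converge by condition (iii), so there is \emph{no} logarithmic loss for the density. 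The factor $\norm{\rho}_{B^0_{\infty,\infty}}$ is then handled in $\mathrm{L}^2$ of time via the energy identity \eqref{eq1.0d}, and the whole thing closes with an ordinary quadratic Gronwall inequality on $\Theta^2(t)$ --- the Osgood condition (vi) you invoke is not used here (it belongs to the uniqueness and existence arguments). The same $B^1_{p_0,1}$ estimate also delivers $\nabla\rho\in\mathrm{L}^{p_0}$ directly, so the $\abs{\theta}^{p-2}\theta$ energy estimate with Gagliardo--Nirenberg absorption in your Step 1, while perhaps salvageable, is unnecessary and is another unverified delicate point. In short: you need either to prove the sharp one-factor commutator loss and explain why the diffusion exempts $\nabla\rho$ from it, or to restructure Steps 1--2 around the negative-regularity forcing estimate as the paper does.
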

\begin{rmk}
Note that while we do not explicitly use assumptions (\ref{Gamma1}) and (\ref{Gamma2}) in the main body of this paper, they are an essential piece of Proposition \ref{vprop1}.
\end{rmk}

Observe that if we apply the curl operator to the equation satisfied by the velocity field $u$ then curl $\nabla P$ = 0, and we have the vorticity equation $\partial_t \omega + (u, \nabla) \omega = \partial_1 \rho$. Integrating along characteristic curves, we have that for flow map $X(x,t;\tau) = X_u(x,t;\tau)$,
\begin{equation*}
\omega(x(t),t) = \omega_0(X^{-1}(t;0)x(t)) + \int_0^t \partial_1 \rho(X^{-1}(t;\tau)x(t),\tau)\text{d}\tau.
\end{equation*}
For any $p \in [1, \infty)$, this implies that
\begin{equation}\label{eq1.0k}
\norm{\omega(t)}_p \leq \norm{\omega_0}_p + \int_0^t \norm{\nabla \rho}_p\text{d}\tau,
\end{equation}
since $X$ is a volume-preserving homeomorphism.
In regards to the $B_{\Gamma_1}$ norm, an initial estimate gives us:
\begin{equation}\label{eq1.0g}
\norm{\omega(t)}_{\Gamma_1} \leq \norm{\omega_0(X^{-1}(t))}_{\Gamma_1} + \int_0^t \norm{\nabla \rho(X^{-1}(t;\tau),\tau)}_{\Gamma_1}\text{d}\tau.
\end{equation}
With (\ref{eq1.0g}) in mind, we must address the following two concerns: First, does the gradient of density remain in the initial $B_\Gamma$ space for positive time? Assuming the first question, how does the inverse flow map $X^{-1}$ act on the $B_\Gamma$ spaces? In order to address this second question, we make use of a key proposition of Vishik in ~\cite{MR1717576}:
\begin{prop}[Vishik]\label{vprop1}
Let $\omega_0 \in B_\Gamma \cap \mathrm{L}^{p_0} \cap \mathrm{L}^{p_1}$, and let $\rho_0 \in \mathrm{W}^{1,p_0} \cap \mathrm{W}^{1,p_1}$ such that $\nabla \rho_0 \in B_\Gamma$.  Let ($u,\rho$) be a regular solution to ($B_{\kappa, 0}$) such that $u\in \mathcal{K} * C([0,T]; B_{\Gamma_1} \cap \mathrm{L}^{p_0} \cap \mathrm{L}^{p_1})$, where $T$ is defined for each case below. Let $X_u(t;\tau)$ be the flow map given by $u$, and let $f \in B_\Gamma \cap \mathrm{L}^{p_0} \cap \mathrm{L}^{p_1}$ be an arbitrary function. Then we have the following estimates:

\renewcommand{\labelenumi}{\arabic{enumi}.}
\begin{enumerate}
\item If (\ref{Gamma1}) holds, then there exists a $T > 0$ and a $C > 0$ (both depending on $\Gamma$ and the initial data), such that for $0\leq \tau \leq t \leq T$,
\begin{equation*}
\norm{f \circ X^{-1}_u(t;\tau)}_{\Gamma_1} \leq C \norm{f}_\Gamma.
\end{equation*}
\item Under assumption (\ref{Gamma2}), let $T> 0$ be arbitrary. Then there exists $\lambda(\cdot) \in \mathrm{L}^\infty_{\text{loc}}(0,\infty)$ (depending on $\Gamma$ and the initial data), such that:
\begin{equation*}
\norm{f \circ X^{-1}_u(t;\tau)}_{\Gamma_1} \leq C\norm{f}_\Gamma \lambda(t)
\end{equation*}
for all $0 \leq \tau \leq t \leq T$.
\end{enumerate}
\end{prop}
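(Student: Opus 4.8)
The plan is to follow Vishik's strategy and reduce the claim to two ingredients: a quantitative modulus-of-continuity estimate for the (inverse) flow map, and a dyadic estimate for how composition with a volume-preserving homeomorphism redistributes Littlewood--Paley frequencies. I would begin by recording the velocity regularity. Since $u = \mathcal{K} * \omega$ and differentiating the Biot--Savart kernel produces a zeroth-order Calderón--Zygmund operator, $\norm{\Delta_j \nabla u}_\infty$ is comparable to $\norm{\Delta_j \omega}_\infty$; the hypothesis $u \in \mathcal{K} * C([0,T]; B_{\Gamma_1} \cap \mathrm{L}^{p_0}\cap\mathrm{L}^{p_1})$ then gives $\sum_{j=-1}^N \norm{\Delta_j \nabla u(\tau)}_\infty \lesssim \Gamma_1(N)$ uniformly for $\tau \le T$. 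Splitting $u(x)-u(y)$ into its $S_N$ and $(I-S_N)$ parts with $2^{-N}\sim\abs{x-y}$, the low-frequency part contributes $\abs{x-y}\,\Gamma_1(N)$ while the high-frequency tail is $\lesssim \sum_{j>N}2^{-j}\norm{\Delta_j\omega}_\infty$, which property (iii) for $\Gamma_1$ bounds by the same quantity. This yields the $\Gamma_1$-modulus $\abs{u(x)-u(y)} \lesssim \abs{x-y}\,\Gamma_1\!\left(\log_2\abs{x-y}^{-1}\right)$.

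Next I would transfer this regularity to the flow. Writing the defining ODEs for $X$ and $X^{-1}$ and running a Gronwall/Osgood argument against the $\Gamma_1$-modulus, one obtains a quantitative (sub-Lipschitz) modulus of continuity for $X^{-1}(t;\tau)$ whose deviation from Lipschitz is measured by an integral of the form $\int \Gamma_1(\alpha)^{-1}\,\mathrm{d}\alpha$. Condition (vi) is precisely what guarantees this stays finite, and the convexity (v) together with (iv) controls its rate of accumulation in time. In Case 1 the smallness of $T$ keeps the accumulated distortion within a fixed multiple of the identity, producing a uniform constant; in Case 2, under the stronger hypothesis (\ref{Gamma2}), I would instead track the growth over an arbitrary interval and package it into the locally bounded factor $\lambda(t)$.

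The heart of the argument is the composition estimate. Writing $f \circ X^{-1} = \sum_k (\Delta_k f)\circ X^{-1}$ and changing variables by the volume-preserving map to transfer the $\Delta_j$ kernel onto $X^{-1}$, I would estimate each $\norm{\Delta_j\big((\Delta_k f)\circ X^{-1}\big)}_\infty$. For $k \ge j$ one uses the oscillation (mean value zero) of $\varphi_j$ against the flow modulus to gain decay in $k-j$; for $k \le j$ the composition transfers the low-frequency mass of $\Delta_k f$ up to frequency $2^j$, and the resulting near-uniform coupling across $-1 \le k \le j$ is the decisive feature. Summing the off-diagonal pieces collapses the double sum, and the low-frequency coupling leaves a contribution of the form $\sum_{j=-1}^N \Gamma(j)$; since $\Gamma$ is nondecreasing this is $\lesssim (N+2)\Gamma(N) = \Gamma_1(N)$, which is exactly the arithmetic factor converting $\Gamma$ into $\Gamma_1$. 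Dividing by $\Gamma_1(N)$ and taking the supremum over $N$ gives $\norm{f\circ X^{-1}(t;\tau)}_{\Gamma_1} \lesssim \norm{f}_\Gamma$ in Case 1, and the same with an extra $\lambda(t)$ in Case 2; the auxiliary memberships $f \in \mathrm{L}^{p_0}\cap\mathrm{L}^{p_1}$ serve only to justify these manipulations on a dense class.

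The main obstacle is the interplay between the composition estimate and the flow regularity: the off-diagonal kernel bounds degrade precisely as the flow loses Lipschitz control, so the scheme only closes because the extra logarithmic weight built into $\Gamma_1 = (\alpha+2)\Gamma(\alpha)$ absorbs the loss from summing the frequency-coupling terms, while conditions (iii)--(vi) are calibrated so that the Osgood integral governing the flow distortion converges on $[0,T]$. Making the quantitative constants in the Osgood step cooperate with the choice of $T$ in Case 1 (respectively with the growth of $\lambda$ in Case 2) is the delicate point, as is verifying that assumption (\ref{Gamma1}), equivalently $(\alpha+2)\Gamma'(\alpha)\le C$, is exactly what keeps the frequency-transfer constant bounded over the short time interval.
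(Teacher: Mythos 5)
The paper does not actually prove this proposition: it is imported verbatim from Vishik's work \cite{MR1717576} (hence the attribution), and the only ``proof'' in the text is the citation. So the right comparison is against Vishik's argument, and your outline does reproduce its three-stage architecture: (a) a $\Gamma_1$-modulus of continuity for $u$ from the hypothesis $\omega \in C([0,T];B_{\Gamma_1}\cap \mathrm{L}^{p_0}\cap\mathrm{L}^{p_1})$, (b) an Osgood-type transfer of that modulus to $X^{-1}_u$, and (c) a composition lemma showing that a measure-preserving homeomorphism with such a modulus maps $B_\Gamma$ into $B_{\Gamma_1}$, the factor $(N+2)$ arising from the near-uniform coupling of the blocks $\Delta_k f$, $-1\le k\le j$, after composition. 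That is the correct strategy, and your identification of where the arithmetic factor $\Gamma_1=(\alpha+2)\Gamma$ comes from is the right key observation.

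Two cautions. First, you have the Osgood condition backwards: condition (vi) asserts $\int_1^\infty (\Gamma_1(\alpha))^{-1}\,\mathrm{d}\alpha = \infty$, and it is the \emph{divergence} of this integral that prevents the modulus of continuity of $X^{-1}$ from degenerating in finite time; saying (vi) ``guarantees this stays finite'' inverts its role. Second, and more substantively, the proposal is a sketch at exactly the two places where the work lives: the dyadic composition estimate (your treatment of the $k\ge j$ versus $k\le j$ ranges is asserted, not carried out, and the claimed decay in $k-j$ for $k\ge j$ depends quantitatively on the flow's modulus, which is where assumptions (\ref{Gamma1}) and (\ref{Gamma2}) must actually be deployed rather than merely invoked), and the precise mechanism by which $(\alpha+2)\Gamma'(\alpha)\le C$ yields a uniform constant on a short interval while $\Gamma'(\alpha)\Gamma_1(\alpha)\le C$ yields the locally bounded $\lambda(t)$ on an arbitrary one. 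Also, the $\mathrm{L}^{p_0}\cap\mathrm{L}^{p_1}$ hypotheses are not merely a density device: with $p_0<2<p_1$ they are what control the $j=-1$ blocks of $u$, $\nabla u$, and $f\circ X^{-1}$, for which the $B_\Gamma$ information alone says nothing. As an outline of Vishik's proof the proposal is sound; as a self-contained proof it is not yet one.
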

\begin{rmk}
For ease of exposition, we fix $T >0$ for the remainder of this section. In the case of assumption (\ref{Gamma1}), this $T$ depends on our choice of $\Gamma$, while under assumption (\ref{Gamma2}), this choice of $T$ is arbitrary.
\end{rmk}

The second tool we need to prove Theorem \ref{apriorithm} is the following a priori control on the density $\rho$:
\begin{thm}\label{densitythm}
Assume $\omega_0$ and $\rho_0$ are defined as above. Set $\alpha = \left(\frac{1+\kappa t}{\kappa}\right)$. Then there is a constant $C > 0$ (depending only on  $\Gamma$, $f$, $g$ and $T$) such that for all $t >0$,  $\int_0^t \norm{\nabla \rho(\tau)}_{\mathrm{L}^{p_0} \cap B_\Gamma} \text{d}\tau \leq \Upsilon(t)$, where
\begin{align*} \Upsilon(t) = \left[\alpha^2\left(\norm{\rho_0}_{B^{-1}_{p_0,1}\cap B^{-1}_{\infty, 1}}^2 + \alpha t \norm{\rho_0}_2^2 \norm{\omega_0}_{\mathrm{L}^{p_0}\cap B_\Gamma}^2\right)\right]^{\frac{1}{2}} \exp \left[C\alpha^3 \norm{\rho_0}_2^2 t\right].
\end{align*}
\end{thm}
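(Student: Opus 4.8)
The plan is to treat the density equation $\partial_t\rho + u\cdot\nabla\rho = \kappa\Delta\rho$ as a transport–diffusion equation and to extract from the dissipative term $\kappa\Delta\rho$ a gain of one full derivative once we integrate in time; this parabolic smoothing is exactly what lets a $B_\Gamma$ (morally $B^0_{\infty,1}$) bound on $\nabla\rho$ be paid for by one order \emph{less} regularity, namely $B^{-1}_{\infty,1}$, on $\rho_0$. I would begin by recording the basic $\mathrm{L}^2$ theory: since $u$ is divergence free, pairing the equation with $\rho$ gives $\frac{d}{dt}\norm{\rho}_2^2 = -2\kappa\norm{\nabla\rho}_2^2$, whence $\norm{\rho(t)}_2\le\norm{\rho_0}_2$ and the dissipation bound $\int_0^t\norm{\nabla\rho(\tau)}_2^2\,d\tau\le\frac{1}{2\kappa}\norm{\rho_0}_2^2$. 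This is the origin of every $\norm{\rho_0}_2^2$ appearing in $\Upsilon$, and the factor $\kappa^{-1}$ it produces is the low-order part of $\alpha=(1+\kappa t)/\kappa$.

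Next I would localize in frequency. Applying $\Delta_j$ and using that the flow of the divergence-free $u$ preserves every $\mathrm{L}^p$ norm, the standard transport–diffusion estimate (the generalized Bernstein inequality supplying the dissipation $c\kappa 2^{2j}$ at frequency $2^j$) yields
\[
\norm{\Delta_j\rho(t)}_p \le e^{-c\kappa 2^{2j}t}\norm{\Delta_j\rho_0}_p + \int_0^t e^{-c\kappa 2^{2j}(t-s)}\norm{[\Delta_j,u\cdot\nabla]\rho(s)}_p\,ds.
\]
Multiplying by $2^j$ (Bernstein, to realize $\nabla$) and integrating in $t$, the decisive elementary computation is
\[
\int_0^t 2^j e^{-c\kappa 2^{2j}\tau}\,d\tau \le \min\set{2^j t,\ (c\kappa 2^j)^{-1}} \le C\,2^{-j}\alpha,
\]
where the bound $2^{2j}t\le\kappa^{-1}\le\alpha$ in the low-frequency range reconciles the two terms in the minimum. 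Hence the homogeneous contribution is $\int_0^t\norm{\nabla\Delta_j\rho}_p\,d\tau\lesssim\alpha\,2^{-j}\norm{\Delta_j\rho_0}_p$, and summing over $j\ge-1$ gives precisely $\alpha\norm{\rho_0}_{B^{-1}_{p,1}}$; taking $p=p_0$ and $p=\infty$ produces the two norms in $\norm{\rho_0}_{B^{-1}_{p_0,1}\cap B^{-1}_{\infty,1}}$, each weighted by $\alpha$ (consistent with the $\alpha^2\norm{\rho_0}_{B^{-1}_{p_0,1}\cap B^{-1}_{\infty,1}}^2$ under the square root).

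The heart of the argument is the forcing generated by the commutator $[\Delta_j,u\cdot\nabla]\rho$, for which I would invoke the commutator estimate established earlier in this section. After the same parabolic time-integration (which again contributes the prefactor $\alpha\,2^{-j}$ via Fubini and Young's inequality) this term is bounded by a product of a norm of $u$ and a norm of $\nabla\rho$; since $u=\mathcal{K}*\omega$, the Biot–Savart law converts the $u$-factor into a vorticity norm, which is how $\norm{\omega_0}_{\mathrm{L}^{p_0}\cap B_\Gamma}$ enters. I expect this contribution to split into two pieces: an inhomogeneous piece in which the $\nabla\rho$-factor is measured in $\mathrm{L}^2$ and estimated by Cauchy–Schwarz against the dissipation bound $\int_0^t\norm{\nabla\rho}_2^2\,d\tau\le\frac{1}{2\kappa}\norm{\rho_0}_2^2$, producing the source term $\alpha t\,\norm{\rho_0}_2^2\norm{\omega_0}_{\mathrm{L}^{p_0}\cap B_\Gamma}^2$ inside the bracket; and a feedback piece in which $\nabla\rho$ reappears in the target norm $\mathrm{L}^{p_0}\cap B_\Gamma$, multiplied by a coefficient governed, through the $\mathrm{L}^2$ theory of the coupled system, by $\alpha^3\norm{\rho_0}_2^2$. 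The main obstacle is precisely this forcing step: one must carry the derivative gained from diffusion \emph{through} the commutator uniformly in $j$, sum against the borderline $B_\Gamma$-weight without destroying the logarithmic summability, and organize the powers of $\alpha$ so that the feedback is genuinely of Grönwall type with rate $\sim\alpha^3\norm{\rho_0}_2^2$ rather than a stronger nonlinearity.

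Finally I would close the estimate. Writing $\Phi(t)=\int_0^t\norm{\nabla\rho(\tau)}_{\mathrm{L}^{p_0}\cap B_\Gamma}\,d\tau$ and collecting the three contributions, the inequality takes the integral Grönwall form
\[
\Phi(t)^2 \le \alpha^2\Big(\norm{\rho_0}_{B^{-1}_{p_0,1}\cap B^{-1}_{\infty,1}}^2 + \alpha t\,\norm{\rho_0}_2^2\norm{\omega_0}_{\mathrm{L}^{p_0}\cap B_\Gamma}^2\Big) + C\alpha^3\norm{\rho_0}_2^2\int_0^t\Phi(\tau)^2\,d\tau,
\]
and Grönwall's inequality turns the linear feedback in $\int_0^t\Phi^2$ into the amplification $\exp[C\alpha^3\norm{\rho_0}_2^2 t]$, which after taking square roots is exactly $\Upsilon(t)$.
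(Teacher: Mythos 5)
Your overall architecture---parabolic smoothing converting a $B^{-1}_{p,1}$ bound on $\rho_0$ into an $\mathrm{L}^1_t$ bound on $\nabla\rho$ one derivative higher, the energy identity $\norm{\rho(t)}_2^2+2\kappa\int_0^t\norm{\nabla\rho}_2^2\,\mathrm{d}\tau=\norm{\rho_0}_2^2$ as the source of every $\norm{\rho_0}_2$, H\"older in time, and a Gronwall closure on the square of $\Theta(t)=\int_0^t\norm{\nabla\rho}_{\mathrm{L}^{p_0}\cap B_\Gamma}\,\mathrm{d}\tau$---matches the paper, and your computation of the prefactor $\alpha 2^{-j}$ from $\int_0^t 2^je^{-c\kappa 2^{2j}\tau}\,\mathrm{d}\tau$ is exactly (\ref{eq1.0n}). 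However, there are two genuine gaps. First, the bilinear estimate: the paper does not use the commutator $[\Delta_j,u\cdot\nabla]\rho$ here at all (Theorem \ref{Comthm} is reserved for the uniqueness and existence arguments). It keeps the full nonlinearity $h=-(u,\nabla)\rho$ as the Duhamel forcing and estimates $\norm{(u,\nabla)\rho}_{B^{-1}_{\infty,1}}$ by Bony's decomposition, using $\text{div}\,u=0$ to write $R(u,\nabla\rho)=\text{div}\,R(u,\rho)$ and the $2^{-j}$ weight of the $B^{-1}_{\infty,1}$ norm to undo the derivative on $\rho$ in the paraproducts. The payoff is that only $\norm{u}_{B^0_{\infty,1}}\lesssim\norm{\omega}_{p_0}+\norm{\omega}_{\Gamma_1}$ and $\norm{\rho}_{B^0_{\infty,\infty}}\lesssim\norm{\rho}_2+\norm{\nabla\rho}_2$ are needed; $\norm{\nabla u}_\infty$, which is \emph{not} finite in this borderline setting, never appears. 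Your commutator route forces you through terms like $\norm{S_{l-2}\nabla u}_\infty$, which grow like $\Gamma_1(l)$, and you explicitly defer the verification that the parabolic weight tames this growth (``the main obstacle''). That verification is precisely the content you would have to supply; the paraproduct route sidesteps it entirely.

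Second, and more seriously, you never supply the mechanism that produces $\norm{\omega_0}_{\mathrm{L}^{p_0}\cap B_\Gamma}$ and the Gronwall feedback. Biot--Savart converts the $u$-factor into $\norm{\omega(\tau)}_{p_0}+\norm{\omega(\tau)}_{\Gamma_1}$ at time $\tau$, not into a norm of the \emph{initial} vorticity. The paper closes the loop by integrating the vorticity equation $\partial_t\omega+(u,\nabla)\omega=\partial_1\rho$ along characteristics to obtain $\norm{\omega(\tau)}_{\Gamma_1}\leq C\left(\norm{\omega_0}_\Gamma+\int_0^\tau\norm{\nabla\rho}_\Gamma\,\mathrm{d}s\right)=C\left(\norm{\omega_0}_\Gamma+\Theta(\tau)\right)$; the $B_{\Gamma_1}$ version of this step requires Vishik's Proposition \ref{vprop1} on the action of the inverse flow map on $B_\Gamma$, which is the source of the $\Gamma\to\Gamma_1$ loss, of the time restriction $t\leq T$, and of hypotheses (\ref{Gamma1})--(\ref{Gamma2}). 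Substituting this bound into the H\"older factor $\left(\int_0^t(\norm{\omega}_{p_0}+\norm{\omega}_{\Gamma_1})^2\,\mathrm{d}\tau\right)^{1/2}$ is what simultaneously generates the source term $\alpha t\norm{\rho_0}_2^2\norm{\omega_0}_{\mathrm{L}^{p_0}\cap B_\Gamma}^2$ and the feedback $\int_0^t\Theta^2(\tau)\,\mathrm{d}\tau$. Your proposal asserts the final Gronwall inequality but attributes the feedback to the commutator structure rather than to the vorticity transport; without the vorticity equation and Proposition \ref{vprop1}, $\norm{\omega(\tau)}_{\Gamma_1}$ remains an uncontrolled quantity and the estimate does not close.
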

\noindent Using Proposition \ref{vprop1} together with Theorem \ref{densitythm}, we conclude that for $t \in [0,T]$, the terms on the right hand side of (\ref{eq1.0g}) are bounded by a constant multiple of $\norm{\omega_0}_\Gamma$ and $\int_0^t \norm{\nabla \rho(\tau)}_\Gamma \text{d}\tau$, respectively, hence proving Theorem \ref{apriorithm}.

It remains to prove Theorem \ref{densitythm}. First, note that $\rho$ solves
\begin{equation*}
\partial_t \rho - \kappa \Delta \rho = h,
\end{equation*}
where $h = -(u, \nabla) \rho$. Written in this form, it becomes evident that we may treat the right hand side as the nonhomogeneous part of a heat equation and make use of the smoothing properties of the Laplacian. Let $\{e^{\nu \Delta}\}_{\nu > 0}$ stand for the heat semi-group. Applying the $\Delta_j$ operator (for $j \geq 0$) to the above equation, we have
\begin{equation}\label{eq1.0o}
\partial_t \Delta_j \rho - \kappa \Delta \Delta_j \rho = \Delta_j h,
\end{equation}
which implies that
\begin{equation}\label{eq1.0l}
\Delta_j \rho(t) = e^{\kappa t \Delta}\Delta_j\rho_0 + \int_0^t e^{\kappa(t-\tau)\Delta}\Delta_j h(\tau)\text{d}\tau.
\end{equation}
As in ~\cite{MR2520505}, we make use of the following result of Chemin (see ~\cite{MR1753481} for proof):
\begin{prop}[Chemin]\label{Cheminprop} There exists positive constants $(C,c)$ such that for any $1 \leq p \leq \infty$ and $\nu > 0$,
\[\norm{e^{\nu \Delta} \Delta_j h}_p \leq Ce^{-c\nu 2^{2j}}\norm{\Delta_j h}_p.\]
\end{prop}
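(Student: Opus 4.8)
The proposition is a spectrally localized smoothing estimate for the heat semigroup: since $\Delta_j h$ has Fourier support in an annulus of size $2^j$, the multiplier $e^{-\nu\abs{\xi}^2}$ is bounded there by $e^{-c\nu 2^{2j}}$, and the plan is to convert this pointwise symbol bound into an $L^1$ bound on a convolution kernel, after which the estimate follows from Young's inequality. I take $j\geq 0$ (the range in which the result is applied, cf. (\ref{eq1.0o})), so that $\widehat{\Delta_j h}$ is supported in $\set{2^{j-1}\leq\abs{\xi}\leq 2^{j+1}}$. Fix a radial $\tilde\varphi\in\mathcal{S}$ with $\tilde\varphi\equiv 1$ on $\set{\tfrac12\leq\abs{\xi}\leq 2}$ and $\text{supp }\tilde\varphi\subset\set{\tfrac14\leq\abs{\xi}\leq 4}$, so that $\tilde\varphi(2^{-j}\cdot)\equiv 1$ on the support of $\widehat{\Delta_j h}$. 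Inserting this cutoff into the symbol (which is legitimate precisely because it equals $1$ on that support) gives
\[
e^{\nu\Delta}\Delta_j h = K_{j,\nu}\star(\Delta_j h),\qquad K_{j,\nu}=\mathcal{F}^{-1}\!\left(e^{-\nu\abs{\xi}^2}\tilde\varphi(2^{-j}\xi)\right),
\]
whence $\norm{e^{\nu\Delta}\Delta_j h}_p\leq\norm{K_{j,\nu}}_1\norm{\Delta_j h}_p$ for every $p\in[1,\infty]$ by Young's inequality. The whole proposition therefore reduces to the kernel bound $\norm{K_{j,\nu}}_1\leq Ce^{-c\nu 2^{2j}}$.

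To prove that bound I would rescale. Writing $\xi=2^j\eta$ and setting $s=\nu 2^{2j}$, a change of variables gives $\norm{K_{j,\nu}}_1=\norm{g_s}_1$ with $g_s=\mathcal{F}^{-1}(e^{-s\abs{\eta}^2}\tilde\varphi(\eta))$, so that the dyadic index has been absorbed entirely into the single parameter $s$. On $\text{supp }\tilde\varphi$ one has $\abs{\eta}^2\geq c_0:=\tfrac1{16}$, so $e^{-s\abs{\eta}^2}=e^{-c_0 s}e^{-s(\abs{\eta}^2-c_0)}$ with the second factor $\leq 1$ there; thus $g_s=e^{-c_0 s}\,\mathcal{F}^{-1}F_s$, where $F_s=e^{-s(\abs{\eta}^2-c_0)}\tilde\varphi(\eta)$, and it remains to control $\norm{\mathcal{F}^{-1}F_s}_1$.

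For this last step I would use the standard weighted Cauchy--Schwarz/Plancherel bound: for any integer $m>n/4$ the weight $(1+\abs{x}^2)^{-2m}$ is integrable, so
\[
\norm{\mathcal{F}^{-1}F_s}_1\leq C\Big(\int(1+\abs{x}^2)^{2m}\abs{\mathcal{F}^{-1}F_s}^2\,\text{d}x\Big)^{1/2}\leq C_m\sum_{\abs{\alpha}\leq 2m}\norm{\partial^\alpha F_s}_2.
\]
Since $\tilde\varphi$ is supported in a fixed annulus on which $\abs{\eta}$ is bounded, each derivative falling on $e^{-s(\abs{\eta}^2-c_0)}$ costs at most a factor $Cs$, while $e^{-s(\abs{\eta}^2-c_0)}\leq 1$ there; by the Leibniz rule this yields $\norm{\partial^\alpha F_s}_2\leq C(1+s)^{\abs{\alpha}}$, hence $\norm{\mathcal{F}^{-1}F_s}_1\leq C(1+s)^{2m}$, a polynomial in $s$. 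Combining, $\norm{K_{j,\nu}}_1=\norm{g_s}_1\leq Ce^{-c_0 s}(1+s)^{2m}\leq C'e^{-cs}$ for any fixed $c\in(0,c_0)$, which is exactly $C'e^{-c\nu 2^{2j}}$. The only genuinely delicate point is this final absorption: repeatedly differentiating the Gaussian factor produces growing powers of $s$, and one must verify---using that $\tilde\varphi$ is bounded away from the origin---that the growth is merely polynomial before sacrificing a sliver of the decay rate $c_0$ to swallow it. Everything else (the localization, Young's inequality, and the rescaling) is routine.
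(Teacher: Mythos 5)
Your proof is correct. Note that the paper itself does not prove this proposition at all---it is quoted from Chemin's book \cite{MR1753481}---so there is no ``paper proof'' to match; what you have written is essentially the standard argument from that reference: insert a cutoff $\tilde\varphi(2^{-j}\cdot)$ equal to $1$ on the annulus of localization, reduce via Young's inequality to an $L^1$ bound on the kernel $K_{j,\nu}$, rescale so that $j$ enters only through $s=\nu 2^{2j}$, and show $\norm{g_s}_1\leq Ce^{-cs}$. The one place your write-up differs from the usual presentation is the kernel bound: Chemin controls $(1+\abs{x}^2)^{n}\abs{g_s(x)}$ pointwise by integrating by parts with $(\mathrm{Id}-\Delta_\xi)^{n}$ under the integral sign, whereas you use the weighted Cauchy--Schwarz/Plancherel route $\norm{\mathcal{F}^{-1}F_s}_1\leq C_m\sum_{\abs{\alpha}\leq 2m}\norm{\partial^\alpha F_s}_2$; the two are interchangeable here since $F_s$ is supported in a fixed compact annulus, and both correctly reduce matters to the observation that derivatives cost only polynomial factors $(1+s)^{\abs{\alpha}}$, which are absorbed by sacrificing a sliver of the exponential rate $c_0$.

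Two points you handled that deserve emphasis, since they are where a careless proof would fail. First, the restriction to $j\geq 0$ is not cosmetic: for the low-frequency block $j=-1$ the Fourier support touches the origin, the multiplier $e^{-\nu\abs{\xi}^2}$ is not uniformly small there, and no bound of the form $Ce^{-c\nu}$ can hold; this is precisely why the paper treats $\Delta_{-1}\rho$ separately by the maximum principle rather than by Proposition \ref{Cheminprop}. Second, the uniformity in $j$ comes for free only after the rescaling collapses everything onto the single parameter $s$; proving the kernel bound at fixed $j$ without rescaling would leave $j$-dependent constants to track. Your proposal gets both right.
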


Chemin's proposition applied to (\ref{eq1.0l}) gives us
\begin{equation}\label{eq1.0m}\norm{\Delta_j \rho(t)}_p \leq C\left( e^{-C\kappa 2^{2j} t}\norm{\Delta_j\rho_0}_p + \int_0^t e^{-C\kappa 2^{2j} (t-\tau)}\norm{\Delta_j h(\tau)}_p\text{d}\tau\right),\end{equation}
and integrating both sides in $t$ leads to the following inequality for $j\geq 0$:
\begin{equation}\label{eq1.0n} \kappa 2^{j}\int_0^t\norm{\Delta_j \rho(\tau)}_p\text{d}\tau \leq C 2^{-j}\left(\norm{\Delta_j \rho_0}_p + \int_0^t \norm{\Delta_j h(\tau)}_p \text{d}\tau\right).\end{equation}
Let $1 \leq p \leq \infty$. From an application of the maximum principle to (\ref{eq1.0o}), we have
\begin{align*}
&\norm{\Delta_{-1}\rho(t)}_p \leq \norm{\Delta_{-1} \rho_0}_p + \int_0^t \norm{\Delta_{-1}h(\tau)}_p\text{d}\tau\\
&\Rightarrow \int_0^t \norm{\Delta_{-1} \rho(\tau)}_p\text{d}\tau \leq C t\left(\norm{\Delta_{-1} \rho_0}_p + \int_0^t \norm{\Delta_{-1} h(\tau)}_p \text{d}\tau\right).
\end{align*}
Summing $j \geq -1$, we conclude that for any $1 \leq p \leq \infty$,
\begin{equation}\label{eq1.0a}
\kappa \int_0^t \norm{\rho(\tau)}_{B^1_{p, 1}}\text{d}\tau \leq C(1+\kappa t)\left(\norm{\rho_0}_{B^{-1}_{p, 1}} + \int_0^t \norm{(u,\nabla)\rho}_{B^{-1}_{p, 1}} \text{d}\tau\right).
\end{equation}
Observe that for $p = \infty$, the norm on the left hand side of (\ref{eq1.0a}) is equivalent to the $B^0_{\infty,1}$-norm of $\nabla \rho$, which is itself an upper bound for $\norm{\nabla \rho}_\Gamma$. Similarly, for $p= p_0$, the $B^1_{p_0,1}$-norm of $\rho$ on the left hand side is a bound for $\norm{\nabla \rho}_{p_0}$. Therefore, in order to prove the a priori bound on the $B_\Gamma \cap \mathrm{L}^{p_0}$-norm of $\nabla \rho$, it suffices to control the right hand side of (\ref{eq1.0a}) by suitable bounds and then utilize a Gronwall-type estimate.

\begin{rmk}
For the space $B^{-1}_{p_0, 1}$, we use the embedding $\mathrm{W}^{1,p_0} \hookrightarrow B^1_{p_0,\infty} \hookrightarrow B^{-1}_{p_0,1}$ to conclude that $\rho_0 \in B^{-1}_{p_0, 1}$. For details, see ~\cite{MR0461123}.
\end{rmk}

Since the cases $p = \infty$ and $p = p_0$ are nearly identical, we will address only the case $p = \infty$. Use Bony's paraproduct decomposition to write
\[(u,\nabla)\rho = R(u, \nabla \rho) + \sum_{m=1}^2 T_{\partial_m \rho} u_m + T_{u_m}\partial_m \rho,\]
where $R(f,g) = \sum_{|j-k| \leq 1} \Delta_j f \Delta_k g,$ and $T_f g = \sum_{j=0}^\infty S_{j-2} f \Delta_j g.$
Since div $u = 0$, we have $R(u, \nabla \rho) = \text{div} R(u, \rho)$. Because div maps $B^0_{\infty, 1} \rightarrow B^{-1}_{\infty, 1}$, it suffices to bound $\norm{R(u,\rho)}_{B^0_{\infty, 1}}$. To do so, we write
\begin{align*}
\norm{R(u,\rho)}_{B^0_{\infty, 1}} &= \sum_{j=-1}^\infty \norm{\Delta_j \sum_{|k-l|\leq 1} \Delta_k u \Delta_l \rho}_\infty\\
&\leq \sum_{j=-1}^\infty \sum_{\stackrel{|k-l|\leq 1}{|j-k| \leq M_0}} \norm{\Delta_j \Delta_k u \Delta_l \rho}_\infty\\
&\leq C\sum_{j=-1}^\infty \sum_{|j-l| \leq M_0} \norm{\Delta_j u}_\infty \norm{\Delta_l \rho}_\infty\\
&\leq C\norm{\rho}_{B^0_{\infty,\infty}} \norm{u}_{B^0_{\infty, 1}},
\end{align*}
where $\norm{\rho}_{B^0_{\infty,\infty}} = \sup_{j\geq -1} \norm{\Delta_j \rho}_\infty$. Since $\norm{u}_{B^0_{\infty, 1}}$ is bounded by a constant multiple of $\norm{\omega}_{p_0} + \norm{\omega}_{\Gamma_1}$, we conclude that
\begin{equation}\label{eq1.0b}
\norm{R(u,\nabla \rho)}_{B^{-1}_{\infty, 1}} \leq C \norm{\rho}_{B^0_{\infty,\infty}}(\norm{\omega}_{p_0} + \norm{\omega}_{\Gamma_1}).
\end{equation}

For the second term in the paraproduct decomposition, we use the following estimate:
\begin{align*}
\norm{T_{\partial_m \rho} u_m}_{B^{-1}_{\infty, 1}} &= \sum_{j=-1}^\infty 2^{-j}\norm{\Delta_j(\sum_{k=0}^\infty S_{k-2} \partial_m \rho \Delta_k u_m)}_\infty\\
&\leq C \sum_{j=-1}^\infty \sum_{|j-k|\leq M_0} 2^{-j}\norm{\Delta_j \partial_m \rho}_\infty \norm{\Delta_k u_m}_\infty\\
&\leq C \norm{\rho}_{B^0_{\infty,\infty}} \sum_{k=-1}^\infty \norm{\Delta_k u}_\infty\\
&\leq C\norm{\rho}_{B^0_{\infty,\infty}} \norm{u}_{B^0_{\infty, 1}}.
\end{align*}
The above is true regardless of our choice of $m$, and a near identical argument shows the same expression bounds the $B^{-1}_{\infty, 1}$-norm of $T_{u_m} \partial_m \rho$. Thus
\begin{equation}\label{eq1.0c}
\norm{\sum_{m=1}^2 T_{\partial_m \rho} u_m + T_{u_m}\partial_m \rho}_{B^{-1}_{\infty, 1}} \leq  C \norm{\rho}_{B^0_{\infty,\infty}}(\norm{\omega}_{p_0} + \norm{\omega}_{\Gamma_1}).
\end{equation}
By H\"{o}lder's inequality, we can write
\begin{align}\label{eq1.0q}
\int_0^t \norm{\rho}_{B^0_{\infty,\infty}}(\norm{\omega}_{p_0} + \norm{\omega}_{\Gamma_1})\text{d}\tau &\leq\\
&\hspace{-60pt}\left(\int_0^t \norm{\rho}_{B^0_{\infty,\infty}}^2\text{d}\tau\right)^{\frac{1}{2}}\left(\int_0^t (\norm{\omega}_{p_0} + \norm{\omega}_{\Gamma_1})^2\text{d}\tau\right)^{\frac{1}{2}},
\end{align}
and bound each integral individually. To handle the first integral, observe that if we take the $\mathrm{L}^2$ inner product of $\rho$ with the equation satisfied by $\rho$, we have
\[\langle\rho, \partial_t \rho\rangle + \langle\rho, (u,\nabla)\rho\rangle + \langle\rho, \kappa \Delta \rho\rangle= 0,\]
and following an integration by parts in the space variable and a time integration over [0,t], 
\begin{equation}\label{eq1.0d}
\norm{\rho(t)}_2^2 + 2 \kappa \int_0^t \norm{\nabla \rho(\tau)}_2^2 \text{d}\tau = \norm{\rho_0}_2^2
\end{equation} for all $t \in \Real_+$. By the definition of $\norm{\rho}_{B^0_{\infty,\infty}}$ and Bernstein's inequality, we have
\begin{align}\label{eq1.0p}
\norm{\rho}_{B^0_{\infty,\infty}} &= \sup_{j \geq -1} \norm{\Delta_j \rho}_\infty\\
&\nonumber \leq \sup_{j \geq -1} 2^j \norm{\Delta_j \rho}_2\\
&\nonumber \leq C \norm{\Delta_{-1} \rho}_2 + \sup_{j \geq 0} 2^j \norm{\Delta_j \rho}_2\\
&\nonumber \leq C (\norm{\rho}_2 + \norm{\nabla \rho}_2).
\end{align}
Squaring both sides and integrating over time, we have by (\ref{eq1.0d})
\begin{align*}
\int_0^t \norm{\rho}_{B^0_{\infty,\infty}}^2\text{d}\tau &\leq C \int_0^t \norm{\rho}_2^2 \text{d}\tau + \int_0^t \norm{\nabla \rho}_2^2\text{d}\tau\\
&\leq C \norm{\rho_0}_2^2 t + \frac{1}{\kappa}\norm{\rho_0}_2^2\\
&\leq C \alpha\norm{\rho_0}_2^2.
\end{align*}
Combining the above with the bound given by (\ref{eq1.0a}), we conclude that
\begin{align}\label{eq1.0e}
\nonumber\int_0^t \norm{\nabla \rho(\tau)}_\Gamma\text{d}\tau &\leq C\alpha\left[\norm{\rho_0}_{B^{-1}_{\infty, 1}} + C\int_0^t \norm{\rho}_{B^0_{\infty,\infty}}(\norm{\omega}_{p_0} + \norm{\omega}_{\Gamma_1})\text{d}\tau\right]\\
& \leq C\alpha\left[\norm{\rho_0}_{B^{-1}_{\infty, 1}} + C \alpha^{\frac{1}{2}}\norm{\rho_0}_2 \left(\int_0^t (\norm{\omega}_{p_0} + \norm{\omega}_{\Gamma_1})^2\text{d}\tau\right)^{\frac{1}{2}}\right]
\end{align}
To achieve the desired a priori bound on the gradient of the density, it then suffices to control the $\mathrm{L}^2$ (in time) integral of the $\mathrm{L}^{p_0}$ and $B_{\Gamma_1}$ norms (in space) of vorticity.
By (\ref{eq1.0k}) and Proposition \ref{vprop1}, we have for all $t \in [0,T]$:
\begin{align}\label{eq1.0h}
&\norm{\omega(t)}_{p_0} \leq \norm{\omega_0}_{p_0} + \int_0^t \norm{\nabla \rho}_{p_0}\text{d}\tau,\\
&\norm{\omega(t)}_{\Gamma_1} \leq C(\norm{\omega_0}_\Gamma + \int_0^t \norm{\nabla \rho(\tau)}_\Gamma\text{d}\tau).
\end{align}
Define $\Theta(t) = \int_0^t \norm{\nabla \rho(\tau)}_{B_\Gamma \cap \mathrm{L}^{p_0}}\text{d}\tau$, where $\norm{\cdot}_{B_\Gamma \cap \mathrm{L}^{p_0}} = \max \left\{\norm{\cdot}_{p_0}, \norm{\cdot}_\Gamma \right\}$. Then inserting the above into (\ref{eq1.0e}) gives
\begin{align}\label{eq1.0i}
\nonumber \int_0^t \norm{\nabla \rho(\tau)}_\Gamma\text{d}\tau &\leq C\alpha\left[\norm{\rho_0}_{B^{-1}_{\infty, 1}}\right. + \\
& + C \left.\alpha^{\frac{1}{2}}\norm{\rho_0}_2 \left(t \norm{\omega_0}_{\mathrm{L}^{p_0}\cap B_\Gamma}^2
+ C \int_0^t \Theta^2(\tau) \text{d}\tau\right)^{\frac{1}{2}}\right].
\end{align}
\begin{rmk}The argument with respect to $B^0_{p_0,1}$ yields an identical estimate, with $\norm{\nabla \rho}_{p_0}$ and $\norm{\rho_0}_{B^{-1}_{p_0, 1}}$ replacing the first two norms in (\ref{eq1.0i}), respectively.\end{rmk}
\noindent Combining (\ref{eq1.0i}) and the equivalent $B^0_{p_0,1}$ estimate and squaring both sides, we have
\begin{align*}
\Theta^2(t) &\leq C\alpha^2 \left[\norm{\rho_0}_{B^{-1}_{p_0,1}\cap B^{-1}_{\infty, 1}}^2 + C \alpha\norm{\rho_0}_2^2 \left(t \norm{\omega_0}_{\mathrm{L}^{p_0}\cap B_\Gamma}^2
+ C \int_0^t \Theta^2(\tau) \text{d}\tau\right)\right]\\
&\leq C \alpha^2\left[\norm{\rho_0}_{B^{-1}_{p_0,1}\cap B^{-1}_{\infty, 1}}^2 + \alpha t \norm{\rho_0}_2^2 \norm{\omega_0}_{\mathrm{L}^{p_0}\cap B_\Gamma}^2\right]\\
& \hspace{60pt}+ C\alpha^3 \norm{\rho_0}_2^2 \int_0^t \Theta^2(\tau)\text{d}\tau.
\end{align*}
An application of Gronwall's inequality to $\Theta^2(t)$ gives
\begin{equation*}
\Theta^2(t) \leq C \alpha^2\left(\norm{\rho_0}_{B^{-1}_{p_0,1}\cap B^{-1}_{\infty, 1}}^2 + \alpha t \norm{\rho_0}_2^2 \norm{\omega_0}_{\mathrm{L}^{p_0}\cap B_\Gamma}^2\right) \exp \left[C\alpha^3 \norm{\rho_0}_2^2 t\right]
\end{equation*}
and taking a square root of both sides yields the desired bound.

Finally, in order to apply the a priori estimate to the proofs of uniqueness and existence, we first must understand what happens when the $\Delta_j$ operator is applied to the nonlinear term $(u, \nabla)\rho$. We follow the general approach introduced in ~\cite{MR1288809} and write
\[ R_j(u,\rho) = \Delta_j(u, \nabla)\rho - (S_{j-2}u, \nabla)\Delta_j \rho.\]
Let $M_0$ be the constant such that $\Delta_j \Delta_k f = 0$ if $|j-k| > M_0$. Note that $M_0$ depends strictly on our choice of $\varphi$ and $\Phi$ defining the $\Delta_j$ operators.
\begin{thm}\label{Comthm}
For $u,\rho$ defined above:
\begin{align}\label{eq2.1}
\norm{R_j(u,\rho)}_\infty \leq C \sum_{|j-l| \leq M_0} &\left\{ \norm{S_{l-2} \rho}_\infty \norm{\Delta_l \nabla u}_\infty +\norm{S_{l-2} \nabla u}_\infty \norm{\Delta_l \rho}_\infty \right\}\\
&\nonumber+ C 2^j \sideset{}{'}\sum_{\stackrel{l \geq j- M_0}{|l-m|\leq 1}} 2^{-l} \norm{\Delta_l \nabla u}_\infty \norm{\Delta_m \rho}_\infty
\end{align}
where the $\sideset{}{'}\sum$ implies that for $l = -1$, the factor $\norm{\Delta_l \nabla u}_\infty$ should be replaced by $\norm{\Delta_{-1} u}_\infty$.
\end{thm}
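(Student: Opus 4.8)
The starting point is to use the divergence-free constraint to move the derivative out of the nonlinearity, so that every surviving derivative falls on $u$, exactly as the right-hand side of \eqref{eq2.1} demands. Since $\mathrm{div}\,u = 0$ forces $\mathrm{div}\,S_{j-2}u = S_{j-2}\,\mathrm{div}\,u = 0$, and $\Delta_j$ commutes with each $\partial_m$, I would rewrite
\[ R_j(u,\rho) = \sum_{m=1}^2 \partial_m\Bigl[\Delta_j(\rho\,u_m) - S_{j-2}u_m\,\Delta_j\rho\Bigr] =: \sum_{m=1}^2 \partial_m F_{j,m}. \]
Each $F_{j,m}$ is spectrally supported in a ball of radius $\sim 2^j$, so the outer $\partial_m$ contributes a factor $2^j$ by Bernstein's inequality; this is the mechanism that produces the weight $2^j$ in the last sum of \eqref{eq2.1} and, after pairing with $2^{-l}\norm{\Delta_l\nabla u}_\infty \sim \norm{\Delta_l u_m}_\infty$, the decisive decay factor $2^{j-l}$.

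Next I would feed Bony's decomposition $\rho\,u_m = T_{u_m}\rho + T_\rho u_m + R(u_m,\rho)$ into $F_{j,m}$ and match its three pieces to the three families in \eqref{eq2.1}. The high--low paraproduct $T_\rho u_m = \sum_k S_{k-2}\rho\,\Delta_k u_m$ localizes so that $\Delta_j T_\rho u_m = \sum_{|k-j|\le M_0}\Delta_j(S_{k-2}\rho\,\Delta_k u_m)$; applying $\partial_m$ (Bernstein) and $\norm{\Delta_k u_m}_\infty \le C2^{-k}\norm{\Delta_k\nabla u}_\infty$ gives the first family $\sum_{|j-l|\le M_0}\norm{S_{l-2}\rho}_\infty\norm{\Delta_l\nabla u}_\infty$. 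The remainder $R(u_m,\rho) = \sum_{|k-k'|\le 1}\Delta_k u_m\,\Delta_{k'}\rho$ satisfies $\Delta_j R(u_m,\rho) = \sum_{k\ge j-M_0,\,|k-k'|\le 1}\Delta_j(\Delta_k u_m\,\Delta_{k'}\rho)$, where the outer $\partial_m$ supplies $2^j$ and $\norm{\Delta_k u_m}_\infty \le C2^{-k}\norm{\Delta_k\nabla u}_\infty$ supplies $2^{-l}$, producing exactly the third family; the block $k=-1$, where $\norm{\Delta_{-1}\nabla u}_\infty$ is comparable to $\norm{\Delta_{-1}u}_\infty$, accounts for the primed summation.

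The heart of the matter is the low--high paraproduct $T_{u_m}\rho = \sum_k S_{k-2}u_m\,\Delta_k\rho$, which I pair with the subtracted term $S_{j-2}u_m\,\Delta_j\rho$ to form a genuine commutator. Writing $\Delta_j$ as convolution against $\varphi_j$ and setting $a = S_{k-2}u_m$, $b = \Delta_k\rho$, I would expand
\[ [\Delta_j,a]b(x) = \int_{\Real^2}\varphi_j(z)\bigl(a(x-z)-a(x)\bigr)b(x-z)\,\mathrm{d}z, \]
bound $\abs{a(x-z)-a(x)}\le\abs{z}\,\norm{\nabla a}_\infty$, and use $\int\abs{z}\abs{\varphi_j(z)}\,\mathrm{d}z = C2^{-j}$ to get $\norm{[\Delta_j,a]b}_\infty\le C2^{-j}\norm{S_{k-2}\nabla u}_\infty\norm{\Delta_k\rho}_\infty$. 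The gain $2^{-j}$ cancels the $2^j$ from the outer derivative and leaves the second family $\sum_{|j-l|\le M_0}\norm{S_{l-2}\nabla u}_\infty\norm{\Delta_l\rho}_\infty$; the finitely many off-diagonal pieces left over when $\Delta_j\Delta_k\rho$ is re-expanded around $k=j$ are absorbed into the first and third families using the $\mathrm{L}^\infty$-boundedness of the operators $\Delta_i$.

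I expect the main obstacle to be organizing the remainder contribution so that the weight appears as $2^{j-l}$ rather than merely $2^{j}$: this decay for $l>j$ is what ultimately makes the sum over $j$ converge, and it is available only because the divergence-free reformulation transferred the derivative from $\rho$ to the outside of the block. Had I instead kept $\partial_m$ on $\rho$ inside $R(u_m,\partial_m\rho)$, the high--high interaction would produce $\norm{\Delta_l\nabla u}_\infty\norm{\Delta_{k'}\rho}_\infty$ with no gain for $l>j$, and the estimate would break down. The remaining work is routine frequency bookkeeping: checking that all localization constants are controlled by the single constant $M_0$, handling the low block $\Delta_{-1}$ separately through Bernstein, and verifying that the edge terms genuinely land in the three advertised sums.
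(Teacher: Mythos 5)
Your proposal is correct and follows essentially the same route as the paper's (omitted) proof: the paper defers to Vishik's Theorem 6.1, whose argument is precisely the divergence-form rewriting via $\mathrm{div}\,u=0$, the Bony decomposition of $\rho u_m$, and the first-order Taylor/convolution-kernel commutator bound that you describe. The only loose end is your absorption of the off-diagonal leftovers coming from $(S_{k-2}-S_{j-2})u_m\,\Delta_j\Delta_k\rho$, which land in the advertised families only after enlarging the index-range constant $M_0$ rather than fitting the sums literally as written --- a harmless adjustment.
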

\begin{proof}
 The proof roughly follows that of Theorem 6.1 in ~\cite{MR1717576} and is therefore omitted.
\end{proof}

\section{Uniqueness of the flow}\label{USec}
Let $\Pi: \Real \rightarrow [1,\infty)$ be a function such that (i)-(iii) of Section (\ref{IntSec}) are satisfied. In addition, assume the following holds for $\Pi$:
\begin{align}
\label{2assump1}&\int_1^\infty [\Pi(\xi)]^{-1} \text{d}\xi = \infty\\
\label{2assump2}& \Pi(\xi)2^{-\xi} \text{ is nonincreasing for } \alpha \geq C \text{ , } \lim_{\xi\rightarrow \infty} \Pi(\xi)2^{-\xi} = 0.
\end{align}
\begin{rmk} We use $\Pi$ in place of $\Gamma$ and $\Gamma_1$ in this section since the uniqueness result utilizes weaker assumptions on $\Pi$ than those needed in Section \ref{APSec}.\end{rmk}
\begin{thm}\label{uthm}For $t \in [0,T]$, let $(u_1,\rho_1), (u_2,\rho_2)$ be two solutions to $(B_{\kappa,0})$, and let \\
$\omega_{1,2} = \text{curl } u_{1,2}$. We assume that for $ 1 < p_0 <2 $:
\begin{align}
\label{eq3.0.1}&\omega_{1,2}, \nabla \rho_{1,2} \in \mathrm{L}^\infty ([0,T]; \mathrm{L}^{p_0})\text{, } \norm{\omega_{1,2}(\cdot)}_{\Pi}, \norm{\nabla \rho_{1,2}(\cdot)}_\Pi \in \mathrm{L}^\infty([0,T]),\\
\label{eq3.0.2}&u_{1,2} = \mathcal{K} * \omega_{1,2},\\
\label{eq3.0.3}& \text{div }u_{1,2} = 0\\
\label{eq3.0.4}&\omega_{1,2}(\cdot,0)= f(\cdot), \rho_{1,2}(\cdot,0)=g(\cdot)\text{; } f,g\in B_\Pi \cap \mathrm{L}^{p_0}.
\end{align}
Then $(u_1,\rho_1)=(u_2,\rho_2)$ for $t\in [0,T]$.\end{thm}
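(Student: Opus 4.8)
The plan is to derive the system satisfied by the differences and to close an energy estimate with an Osgood inequality whose divergence is furnished by \eqref{2assump1}. Set $\delta u := u_1 - u_2$, $\delta\rho := \rho_1 - \rho_2$, $\delta\omega := \omega_1-\omega_2$. Subtracting the two copies of $(B_{\kappa,0})$ and using $\mathrm{div}\,\delta u = 0$ gives
\[
\partial_t \delta u + (u_1,\nabla)\delta u + (\delta u,\nabla)u_2 + \nabla\delta P = \left(\begin{array}{c} 0 \\ \delta\rho\end{array}\right), \qquad \partial_t\delta\rho + (u_1,\nabla)\delta\rho + (\delta u,\nabla)\rho_2 = \kappa\Delta\delta\rho,
\]
together with $\delta u = \mathcal K * \delta\omega$ and zero initial data. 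I would measure $\delta u$ in $\mathrm{L}^2$; note first that the Biot--Savart gain $\norm{\Delta_j u_i}_\infty \lesssim 2^{-j}\norm{\Delta_j\omega_i}_\infty$ together with \eqref{2assump2} (summation by parts against partial sums of size $\Pi(N)$) yields $u_1,u_2 \in \mathrm{L}^\infty_t\mathrm{L}^\infty$, hence a uniform bound $\norm{\delta u}_\infty \le M$, and that $\delta u$ enjoys a fixed higher integrability $\mathrm{L}^2\cap\mathrm{L}^{q_0}$ coming from $\delta\omega\in\mathrm{L}^{p_0}$. The density difference I would measure in a norm adapted to the diffusion (either $\mathrm{L}^2$ or $\dot{\mathrm{H}}^{-1}$), chosen so the coupling can be absorbed into the parabolic smoothing; fixing this choice and checking the requisite integrability is the first piece of bookkeeping.

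Pairing the $\delta u$ equation with $\delta u$ in $\mathrm{L}^2$, the transport and pressure terms vanish by incompressibility, leaving $\tfrac12\tfrac{d}{dt}\norm{\delta u}_2^2 = -\int(\delta u,\nabla)u_2\cdot\delta u + \int\delta\rho\,(\delta u)_2$. The density forcing is harmless, $\abs{\int\delta\rho\,(\delta u)_2}\le\tfrac12(\norm{\delta\rho}_2^2+\norm{\delta u}_2^2)$, contributing only a linear term. The $\delta\rho$ estimate uses the diffusion: after integrating by parts against $\mathrm{div}\,\delta u=0$, the forcing $\int(\delta u,\nabla)\rho_2\cdot(\text{dual of }\delta\rho)$ is controlled by the $\mathrm{L}^{p_0}\cap B_\Pi$ bound on $\nabla\rho_2$ and $\norm{\delta u}_2$, with the top-order part absorbed into $\kappa\norm{\nabla\delta\rho}_2^2$. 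Writing $Y(t):=\norm{\delta u(t)}_2^2 + \norm{\delta\rho(t)}_*^2$, these steps give $Y'(t)\le C\,Y(t) + \abs{J(t)}$, where $J := -\int(\delta u,\nabla)u_2\cdot\delta u$ is the one genuinely borderline term.

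The crux is the estimate of $J$. Since $\nabla u_2$ is a Calderón--Zygmund transform of $\omega_2$, one has $\norm{\Delta_j\nabla u_2}_\infty\approx\norm{\Delta_j\omega_2}_\infty$, so the definition of $B_\Pi$ controls low frequencies: $\norm{\nabla S_N u_2}_\infty\le\sum_{j\le N}\norm{\Delta_j\nabla u_2}_\infty\le C\norm{\omega_2}_\Pi\,\Pi(N)$. I would either split $u_2 = S_N u_2 + (I-S_N)u_2$ or, equivalently, run the Yudovich interpolation $\abs{J}\le\norm{\nabla u_2}_{\mathrm{L}^p}\norm{\delta u}_{\mathrm{L}^{2p'}}^2$ in which the growth of $\norm{\nabla u_2}_{\mathrm{L}^p}$ with $p$ is dictated by the $B_\Pi$ membership through the Calderón--Zygmund constants. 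The low part gives the factor $\Pi(N)\norm{\delta u}_2^2$; the high part is absorbed using the uniform bound $\norm{\delta u}_\infty\le M$ and the fixed higher integrability of $\delta u$, the remaining frequency tail being rendered summable precisely by \eqref{2assump2}. Interpolating $\norm{\delta u}_{\mathrm{L}^{2p'}}$ between $\norm{\delta u}_2$ and $\norm{\delta u}_\infty$ and optimizing the cutoff at $N\sim\log(e+\norm{\delta u}_2^{-1})$ produces
\[
\abs{J} \le C\,\norm{\delta u}_2^2\,\Pi\!\left(C\log\!\left(e+\norm{\delta u}_2^{-1}\right)\right), \qquad\text{i.e.}\qquad Y'\le \mu(Y),\ \ \mu(s):=C\,s\,\Pi\!\left(C\log(e+s^{-1})\right).
\]

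Finally, the substitution $\xi=\log(1/s)$ gives $\int_0 \mathrm ds/\mu(s) = C^{-1}\int^\infty \mathrm d\xi/\Pi(C\xi)=\infty$ by \eqref{2assump1}, so $\mu$ is an Osgood modulus (and the benign linear term $CY$ is subsumed, since $\int_0 \mathrm ds/s=\infty$ as well). As $Y(0)=0$ and $Y'\le\mu(Y)$, Osgood's lemma forces $Y\equiv0$ on $[0,T]$, whence $\delta u\equiv0$, $\delta\rho\equiv0$, and then $\omega_1=\omega_2$ via Biot--Savart, giving $(u_1,\rho_1)=(u_2,\rho_2)$. The main obstacle is the term $J$: extracting from membership in $B_\Pi$ (rather than in a Lipschitz or $B^0_{\infty,1}$ space) a modulus of continuity whose reciprocal is exactly non-integrable under \eqref{2assump1}, while keeping the high-frequency remainder under control through \eqref{2assump2} and the limited $\mathrm{L}^{p_0}$-based integrability of the differences.
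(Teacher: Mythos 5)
Your overall Osgood skeleton (zero initial data, a modulus $\mu(s)=Cs\,\Pi(C\log(e+s^{-1}))$ whose reciprocal is non-integrable by \eqref{2assump1}) matches the paper's endgame, but the energy-space framework you build it on does not close under the stated hypotheses, and the one term you correctly identify as the crux is exactly where it breaks. First, $\delta u$ is not known to lie in $\mathrm{L}^2$: the theorem only assumes $\omega_{1,2}\in\mathrm{L}^{p_0}$ for a single $p_0\in(1,2)$, and Biot--Savart then gives $u_{1,2}\in\mathrm{L}^{2p_0/(2-p_0)}$ with exponent strictly greater than $2$; no $\mathrm{L}^2$ bound on the velocities (or their difference) is available, so the basic energy identity $\tfrac12\tfrac{d}{dt}\norm{\delta u}_2^2=\dots$ is not defined. (This is precisely why Danchin--Paicu assume $u_0\in\mathrm{L}^2$ separately; that hypothesis is absent here.) Second, even granting $\delta u\in\mathrm{L}^2$, your treatment of $J$ fails: for genuine $B_\Pi$ data the quantity $\sum_{j>N}\norm{\Delta_j\omega_2}_\infty$ diverges (the partial sums grow like $\Pi(M)\to\infty$), so the high-frequency part of $\nabla u_2$ is in neither $\mathrm{L}^\infty$ nor $\mathrm{L}^{p_0}$ with a summable tail; the gain from \eqref{2assump2} and condition (iii) attaches to $(I-S_N)u_2$ itself (norm $\lesssim 2^{-N}\Pi(N)$), not to its gradient, and integrating by parts to exploit that transfers a derivative onto $\delta u$, which you do not control. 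The Yudovich interpolation $\norm{\nabla u_2}_{\mathrm{L}^p}$ is likewise unavailable, since $\omega\in B_\Pi\cap\mathrm{L}^{p_0}$ gives only $\sup_j\norm{\Delta_j\omega}_p\le C$ and not membership in $\mathrm{L}^p$ with controlled growth in $p$. Third, the density coupling $\int(\delta u\cdot\nabla\rho_2)\,\delta\rho$ needs either $\nabla\rho_2\in\mathrm{L}^\infty$ or, after integration by parts, $\rho_2\in\mathrm{L}^\infty$; neither follows from $\nabla\rho_2\in B_\Pi\cap\mathrm{L}^{p_0}$, so this is not ``bookkeeping.''

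The paper sidesteps all three obstructions by never leaving the frequency side: it measures the differences by the truncated sums $\sum_{j=-1}^N\norm{\Delta_j v}_\infty+\norm{\Delta_j\rho}_\infty$, decomposes the nonlinearities as $\Delta_j(u\cdot\nabla)\rho=(S_{j-2}u\cdot\nabla)\Delta_j\rho+R_j(u,\rho)$ and invokes the commutator bound of Theorem \ref{Comthm}, uses Chemin's heat-kernel smoothing for the $\rho$-equation and the flow maps of $S_{j-2}u_1$ for the $v$-equation (plus a separate low/high frequency treatment of the pressure), and controls the tail $\sum_{j>N}$ by Abel summation together with conditions (ii)--(iii), arriving at $F'\le C\Pi(N)F+C2^{-N}\Pi(N)$ and then the Osgood ODE after optimizing $N\sim-\log_2F$. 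If you want to salvage an energy-type argument you would need to add hypotheses ($u_0\in\mathrm{L}^2$, $\rho_0\in\mathrm{L}^\infty$, and some replacement for the missing high-frequency control of $\nabla u_2$), at which point you are proving a different theorem.
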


\begin{proof}Define $v = u_1-u_2$, $\omega = \omega_1-\omega_2$, $\rho = \rho_1-\rho_2$ and $P = P_1-P_2$. We then have (for $\dot{f} = \frac{\partial}{\partial t} f$):
\begin{equation}\label{eq3diff}
\left\{\begin{array}{l}
\dot{v} = - (u_1,\nabla)v - (v,\nabla)u_2 - \nabla P + \left(
                                                         \begin{array}{c}
                                                           0 \\
                                                           \rho \\
                                                         \end{array}
                                                       \right)\\

\dot{\rho} - \kappa \Delta \rho=  -(u_1,\nabla)\rho + (v, \nabla)\rho_2\\
\text{div } v = 0\\
v|_{t=0} = \rho|_{t=0} = 0.
\end{array}\right.\end{equation}
We handle the equation for $\dot{\rho}$ first. Applying the $\Delta_j$ operator, we have
\[\Delta_j \dot{\rho} - \kappa \Delta_j \Delta \rho= -(S_{j-2} u_1, \nabla) \Delta_j \rho + R_j(u_1, \rho) + (S_{j-2} v, \nabla)\Delta_j \rho_2 + R_j(v,\rho_2).\]
Using (\ref{eq1.0m}) and the fact that $\rho(0) = 0$ gives
\begin{align}\label{eq3.1}
\norm{\Delta_j \rho(t)}_\infty &\leq \int_0^t e^{-C\kappa 2^{2j}(t-\tau)}(\norm{R_j(u_1, \rho)}_\infty \\ &\hspace{20pt}+ \nonumber \norm{(S_{j-2} u_1, \nabla) \Delta_j \rho}_\infty + \norm{R_j(v, \rho_2)}_\infty\\
&\hspace{20pt} + \nonumber \norm{(S_{j-2}v,\nabla)\Delta_j \rho_2}_\infty)\text{d}\tau.
\end{align}
It therefore suffices to bound the four terms on the right hand side. By Theorem \ref{Comthm}, we have for fixed $j$
\begin{align}
\nonumber\norm{R_j(u_1,\rho)}_\infty \leq C \sum_{|j-l| \leq M_0} &\left\{ \norm{S_{l-2} \rho}_\infty \norm{\Delta_l \nabla u_1}_\infty +\norm{S_{l-2} \nabla u_1}_\infty \norm{\Delta_l \rho}_\infty \right\}\\
&+\; C 2^j \sum_{l \geq j- M_0}\sideset{}{'}\sum_{|l-m|\leq 1} 2^{-l} \norm{\Delta_l \nabla u_1}_\infty \norm{\Delta_m \rho}_\infty.
\end{align}
Let $A_1$, $A_2$ be the sum of the first and second lines above from $j=-1$ to $N$, respectively. (We will determine $N \geq 1$ later.) Then we have
\begin{align*}
A_1 &= C\sum_{j=-1}^N \sum_{|j-l| \leq M_0} \left\{ \norm{S_{l-2} \rho}_\infty \norm{\Delta_l \nabla u_1}_\infty +\norm{S_{l-2} \nabla u_1}_\infty \norm{\Delta_l \rho}_\infty \right\}\\
&\leq C\left(\sup_{-1\leq l \leq N+M_0}\norm{S_{l-2} \rho}_\infty\right)\sum_{l=-1}^{N+M_0} \norm{\Delta_l \nabla u_1}_\infty\\
&\hspace{30pt} + C\left(\sup_{-1\leq l \leq N+M_0}\norm{S_{l-2} \nabla u_1}_\infty\right)\sum_{l=-1}^{N+M_0} \norm{\Delta_l \rho}_\infty\\
&\leq C \left(\sum_{l=-1}^{N+M_0} \norm{\Delta_l \nabla u_1}_\infty\right)\left(\sum_{l=-1}^{N+M_0} \norm{\Delta_l \rho}_\infty\right).
\end{align*}
Using  Bernstein's inequality, $\omega_1 \in B_\Pi$ and the fact that $\nabla u \mapsto \omega$ is a bounded operator on $\mathrm{L}^{p_0}$, we have
\begin{align*}
\sum_{l=-1}^{N+M_0} \norm{\Delta_l \nabla u_1}_\infty &\leq \norm{\Delta_{-1}\nabla u_1}_\infty + C \sum_{l=0}^{N+M_0} \norm{\Delta_l \omega_1}_\infty\\
&\leq C \norm{\Delta_{-1} \nabla u_1}_{p_0} + C \Pi(N+M_0)\norm{\omega_1}_\Pi\\
&\leq C \norm{\Delta_{-1}\omega_1}_{p_0} + C \Pi(N+M_0)\norm{\omega_1}_\Pi\\
&\leq C\norm{\omega_1}_{p_0} + C \Pi(N+M_0)\norm{\omega_1}_\Pi\\
&\leq C\norm{\omega_1}\Pi(N+M_0),
\end{align*}
where $\norm{\,\cdot\,} = \norm{\,\cdot\,}_{\Pi \cap \mathrm{L}^{p_0}}$ and we make use of the fact that $\Pi(\alpha) \geq 1$ for all $\alpha \in \Real$. Combined with the previous estimate, we conclude
\begin{equation}
A_1 \leq C \norm{\omega_1} \Pi(N+M_0)\sum_{l=-1}^{N+M_0} \norm{\Delta_l \rho}_\infty.
\end{equation}
For the second term, we write:
\begin{align*}
A_2 &= C \sum_{j=-1}^N \sum_{l \geq j- M_0}\sideset{}{'}\sum_{|l-m|\leq 1} 2^{j-l} \norm{\Delta_l \nabla u_1}_\infty \norm{\Delta_m \rho}_\infty\\
&\leq C \sum_{m=-1}^\infty \norm{\Delta_m \rho}_\infty \left(\sum_{j=-1}^{\min (N, m+ M_0 +1)} 2^{j-m}\right) \sideset{}{'}\sum_{|l-m|\leq 1} \norm{\Delta_l \nabla u_1}_\infty\\
&\leq C \sum_{m=-1}^\infty \left(2^{\min(N,m)-m} \sideset{}{'}\sum_{|l-m|\leq 1} \norm{\Delta_l \nabla u_1}_\infty\right)\norm{\Delta_m \rho}_\infty\\
&= \sum_{m=-1}^N \left( \sideset{}{'}\sum_{|l-m|\leq 1} \norm{\Delta_l \nabla u_1}_\infty\right)\norm{\Delta_m \rho}_\infty\\
&\hspace{30pt}+ \sum_{m=N+1}^\infty \left(2^{N-m} \sideset{}{'}\sum_{|l-m|\leq 1} \norm{\Delta_l \nabla u_1}_\infty\right)\norm{\Delta_m \rho}_\infty\\
&\leq C \Pi(N)\norm{\omega_1}\sum_{m=-1}^N \norm{\Delta_m \rho}_\infty + C \sum_{m=N+1}^\infty \left(2^{N-m} \Pi(m) \norm{\omega_1}\right)\norm{\Delta_m \rho}_\infty\\
&\leq C \Pi(N)\norm{\omega_1}\sum_{m=-1}^N \norm{\Delta_m \rho}_\infty\\
&\hspace{30pt} + C \sum_{m=N+1}^\infty \left(\int_N^\infty 2^{N-\xi}\Pi(\xi)\text{d}\xi \norm{\omega_1}\right)\norm{\Delta_m \rho}_\infty\\
&\leq C \Pi(N)\norm{\omega_1}\sum_{m=-1}^\infty \norm{\Delta_m \rho}_\infty. \hspace{15pt}(\text{by (iii)})
\end{align*}
Combined with the estimate for $A_1$, we conclude that
\begin{equation}\label{eq3.2}
\sum_{j=-1}^N \norm{R_j(u_1,\rho)}_\infty \leq C \norm{\omega_1}\Pi(N)\sum_{j=-1}^\infty \norm{\Delta_j \rho}_\infty.
\end{equation}

Next, we estimate $\norm{R_j(v, \rho_2)}_\infty$. Similar to $R_j(u_1,\rho)$, we split the estimate into two terms:
\begin{align*}
\norm{R_j(v,\rho_2)}_\infty \leq C \sum_{|j-l| \leq M_0} &\left\{ \norm{S_{l-2} v}_\infty \norm{\Delta_l \nabla \rho_2}_\infty +\norm{S_{l-2} \nabla \rho_2}_\infty \norm{\Delta_l v}_\infty \right\}\\
&+\; C 2^j \sideset{}{'}\sum_{\stackrel{l \geq j- M_0}{|l-m|\leq 1}} 2^{-l} \norm{\Delta_l \nabla v}_\infty \norm{\Delta_m \rho_2}_\infty\\
\end{align*}
and conclude that
\begin{equation}\label{eq3.3}
\sum_{j=-1}^N \norm{R_j(v,\rho_2)}_\infty \leq C \norm{\nabla \rho_2}\Pi(N)\sum_{j=-1}^\infty \norm{\Delta_j v}_\infty.
\end{equation}

Finally, we estimate $\norm{(S_{j-2}v,\nabla)\Delta_j \rho_2}_\infty$ and $\norm{(S_{j-2}u_1,\nabla)\Delta_j \rho}_\infty$. We have
\[\norm{(S_{j-2}v,\nabla)\Delta_j \rho_2}_\infty \leq \norm{S_{j-2}v}_\infty \norm{\Delta_j \nabla \rho_2}_\infty, \]
from which the estimate
\begin{align}\label{eq3.4}
\sum_{j=-1}^N \norm{(S_{j-2}v,\nabla)\Delta_j \rho_2}_\infty &\leq \left(\sup_{-1 \leq j \leq N} \norm{S_{j-2} v}_\infty\right) \sum_{j=-1}^N \norm{\Delta_j \nabla \rho_2}_\infty\\
&\nonumber\leq C \norm{\nabla \rho_2}\Pi(N)\sum_{j=-1}^N \norm{\Delta_j v}_\infty
\end{align}
easily follows. Similarly, we can write
\begin{equation}\label{eq3.4a}
\sum_{j=-1}^N \norm{(S_{j-2}u_1,\nabla)\Delta_j \rho}_\infty \leq C \norm{\omega_1}\Pi(N)\sum_{j=-1}^N \norm{\Delta_j \rho}_\infty.
\end{equation}
Combining (\ref{eq3.2}) and (\ref{eq3.3})-(\ref{eq3.4a}), we sum (\ref{eq3.1}) from $j=-1$ to $N$ and estimate it as:
\begin{equation}\label{eq3.5}
\sum_{j=-1}^N \norm{\Delta_j \rho (t)}_\infty \leq C\Pi(N)\int_0^t  \sum_{j=-1}^\infty \left(\norm{\Delta_j \rho(\tau)}_\infty + \norm{\Delta_j v(\tau)}_\infty\right)\text{d}\tau,
\end{equation}
where we have used (\ref{eq3.0.1}) to bound $\norm{\omega_1(\tau)}$, $\norm{\nabla \rho_2(\tau)}$ uniformly on $[0,T]$.

Next, we apply the $\Delta_j$ operator to $\dot{v}$ and find
\begin{align}\label{eq3.6}
\Delta_j \dot{v} +(S_{j-2} u_1,\nabla)\Delta_j v &= - R_j(u_1,v) - (S_{j-2}v,\nabla)u_2\\
& \nonumber \hspace{30pt} - R_j(v,u_2) - \Delta_j\nabla P + \left(
                                                         \begin{array}{c}
                                                           0 \\
                                                           \Delta_j\rho \\
                                                         \end{array}
                                                       \right).
\end{align}
Define the flow mappings $\{X_j(x, t;\tau)\}$ given by:
\begin{equation*}
\left\{\begin{array}{l}
\dot{X}_j(x, t;\tau) = S_{j-2} u_1(X_j(x, t;\tau),t)\\
X_j(x, 0;\tau) = x(\tau)
\end{array}\right.
\end{equation*}
(where $S_{j-2}$ is $S_{-1}$ when $j = -1, 0$). Integrating (\ref{eq3.6}) along $\{X_j(\alpha, t;\tau)\}$ and taking the $\mathrm{L}^\infty$-norm of both sides yields
\begin{align}\label{eq3.7}
\norm{\Delta_j v(t)}_\infty &\leq \int_0^t \norm{R_j(u_1,v)}_\infty + \norm{(S_{j-2}v,\nabla)u_2}_\infty\\
&\nonumber\hspace{20pt}+ \norm{R_j(v,u_2)}_\infty + \norm{\Delta_j\nabla P}_\infty + \norm{\Delta_j \rho}_\infty \text{d}\tau.
\end{align}
The estimates for the first three terms hew closely to estimates (\ref{eq3.2}), (\ref{eq3.4}) and (\ref{eq3.3}), respectively, and will therefore not be repeated. For the pressure term, we take the divergence of (\ref{eq3.6}) and use (\ref{eq3.0.3}) to find:
\begin{align}
\Delta_j \Delta P &= -\text{div} R_j(u_1, v) - \text{div}R_j(v, u_2) - \text{tr}(\nabla \Delta_j v \cdot \nabla S_{j-2}u_1)\\
&\nonumber\hspace{30pt} - \text{tr}(\nabla \Delta_j u_2 \cdot \nabla S_{j-2} v) + \Delta_j \partial_2 \rho.
\end{align}

We consider two cases, $j \geq 0$ and $j=-1$. For $j\geq 0$, observe first that
\[ -\nabla \Delta_j P(x) = \mathcal{F}_{\xi\rightarrow x} (i\xi|\xi|^{-2} (\Delta_j\Delta P)\;\hat{ }\; (\xi)).\]
Using the estimate for $\Delta_j \Delta P$ above, as well as a Littlewood-Paley argument and Bernstein's inequality gives:
\begin{align}
\norm{\Delta_j \nabla P}_\infty &\leq C\left(\norm{R_j(u_1, v)}_\infty + \norm{R_j(v,u_2)}_\infty\right.\\
&\nonumber \hspace{20pt} + 2^{-j}\norm{\nabla \Delta_j v}_\infty \norm{S_{j-2} \nabla u_1}_\infty\\
&\nonumber\hspace{20pt} + \left.2^{-j}\norm{\nabla \Delta_j u_2}_\infty \norm{S_{j-2} \nabla v}_\infty + \norm{\Delta_j \rho}_\infty\right)\\
&\nonumber \leq C\left(\norm{R_j(u_1, v)}_\infty + \norm{R_j(v,u_2)}_\infty + \norm{\Delta_j v}_\infty \norm{S_{j-2} \nabla u_1}_\infty\right.\\
&\nonumber \hspace{25pt}\left. + \norm{\Delta_j u_2}_\infty \norm{S_{j-2} \nabla v}_\infty + \norm{\Delta_j \rho}_\infty\right).
\end{align}
For $j=-1$, we use
\begin{align}
\norm{\Delta_{-1} \nabla P}_\infty &\leq C \norm{\Delta_{-1} \nabla P}_{p_2} \\
&\nonumber\leq C\norm{ - \sum_{k=1}^n \partial_k \Delta_{-1} \{u_1^{(k)} v + v^{(k)} u_2\}}_{p_2} + \norm{\Delta_{-1} \rho}_{p_2}\\
&\nonumber\leq C\norm{\Delta_{-1} (u_1 \otimes v)}_{p_2} + C\norm{\Delta_{-1} (v \otimes u_2)}_{p_2} +\norm{\Delta_{-1} \rho}_{p_2}\\
&\nonumber\leq C\sum_{|l-m|\leq M_0} \left(\norm{\Delta_l u_1}_{p_2} + \norm{\Delta_l u_2}_{p_2}\right)\norm{\Delta_m v}_\infty + \norm{\Delta_{-1} \rho}_{p_2},
\end{align}
where $p_2 \in [\frac{np_0}{n-p_0},\infty)$. Sobolev embedding and (\ref{eq3.0.1}) then gives the desired control over  pressure in terms of the other members of the right hand side of (\ref{eq3.7}).

Combining this estimate with those for the previous three terms and using (\ref{eq3.0.1}), we have:
\begin{equation}\label{eq3.8}
\sum_{j=-1}^N \norm{\Delta_j v(t)}_\infty \leq C\Pi(N)\int_0^t \sum_{j=-1}^\infty (\norm{\Delta_j v(\tau)}_\infty + \norm{\Delta_j \rho (\tau)}_\infty) \text{d}\tau,
\end{equation}
and therefore:
\begin{align}\label{eq3.9}
\sum_{j=-1}^N \norm{\Delta_j v(t)}_\infty + \norm{\Delta_j \rho(t)}_\infty &\leq\\
&\nonumber\hspace{-30pt} C\Pi(N)\int_0^t \sum_{j=-1}^\infty \norm{\Delta_j v(\tau)}_\infty + \norm{\Delta_j \rho (\tau)}_\infty \text{d}\tau.
\end{align}
We wish to use a Gronwall-type estimate, so we need to control\\ $\sum_{j=N+1}^\infty \left(\norm{\Delta_j v(t)}_\infty + \norm{\Delta_j \rho(t)}_\infty\right)$ as well. Control of these two quantities is nearly identical, so we address only $\rho(t)$. Suppressing time, we use Bernstein's inequality to write $\sum_{j=N+1}^\infty \norm{\Delta_j \rho}_\infty \leq C \sum_{j=N+1}^\infty 2^{-j}\norm{\Delta_j \nabla \rho}_\infty.$
Next, we set $d_k = \sum_{j=-1}^k \norm{\Delta_j \nabla \rho}_\infty$, and use Abel's lemma to write
\begin{align}\label{eq3.10}
\sum_{j=N+1}^\infty 2^{-j}\norm{\Delta_j \nabla \rho}_\infty &\leq \sum_{j=N+1}^\infty 2^{-j}(d_j - d_{j-1})\\
\nonumber &\leq -2^{(N-1)} d_N + \sum_{j=N+1}^\infty d_j (2^{-j}-2^{-(j+1)}) \\
\nonumber &\leq -2^{(N-1)} d_N + \norm{\nabla \rho}_\Pi\sum_{j=N+1}^\infty 2^{-j}\Pi(j)\\
\nonumber &\leq -2^{(N-1)} d_N + \norm{\nabla \rho}_\Pi\int_{j=N+1}^\infty 2^{-j}\Pi(j)\text{d}j\\
\nonumber &\leq C2^{-N}\Pi(N),
\end{align}
due to conditions (ii) and (iii) on $\Pi$ and (\ref{eq3.0.1}). This allows us to write:
\begin{equation}\label{eq3.11}
\sum_{j=-1}^\infty \left(\norm{\Delta_j v}_\infty + \norm{\Delta_j \rho}_\infty\right)\leq \sum_{j=-1}^N \left(\norm{\Delta_j v}_\infty + \norm{\Delta_j \rho}_\infty\right) + C2^{-N}\Pi(N).
\end{equation}

Set $F(t) = \int_0^t \sum_{j=-1}^\infty \left(\norm{\Delta_j v(\tau)}_\infty + \norm{\Delta_j \rho(\tau)}_\infty\right)\text{d}\tau,$
and use (\ref{eq3.9}) and (\ref{eq3.11}) to achieve the estimate
\[F'(t) \leq C\Pi(N)F(t) + C2^{-N}\Pi(N).\]
$F(t)$ is a monotonically nondecreasing, absolutely continuous function, and given (\ref{eq3.0.1}), we have $\norm{F'}_{\mathrm{L}^\infty([0,T])} \leq C.$ Since $F(0)=0$ by construction, this implies that there exists some $t_0$ such that
\[ F(t) \equiv 0 \text{ on } [0,t_0],\hspace{15pt} F(t) > 0 \text{ on } (t_0,T].\]
If $t_0 = T$, then we have uniqueness. Therefore we assume that $t_0 < T$. Fix $\eps > 0$ sufficiently small that $t_0 + \eps < T$ and $F(t) < 2^{-M_1 -1}$ on $(t_0, t_0 +\eps)$ (where $M_1$ will be determined later). Choose $t \in (t_0, t_0+\eps)$ and let $N = \max\{1,\lceil -\log_2 F(t) \rceil\}$. This gives
\begin{equation}\label{eq3.12}
F'(t) \leq C\Pi(-\log_2 F(t))F(t), \hspace{10pt} F(0) = 0.
\end{equation}
By (\ref{2assump1}), we have $\int_0^{1/2} F^{-1}(\Pi(-\log_2 F))^{-1}\text{d}F = C \int_1^\infty (\Pi(\alpha))^{-1}\text{d}\alpha = \infty,$ and the Osgood Uniqueness Theorem (see ~\cite{MR1688875} for statement) applies to
\begin{equation}\label{eq3.13}
\left\{\begin{array}{l}
\dot{\eta}(t,\delta) = C\Pi(-\log_2 \eta)\eta\vspace{3pt}\\
\eta(t_0,\delta) = \delta\\
\end{array}\right.
\end{equation}
for $\delta > 0$ sufficiently small. $\eta$ exists on $(t_0, t_0 + \eps)$ and depends continuously on $\delta$. By Gronwall's inequality, we have $F(t) < \eta(t,\delta)$ for all $t \in [t_0, t_0+\eps)$. To see this, suppose $t_1 = \min_{[t_0,t_0+\eps)}\{\;t\;| F(t) = \eta(t,\delta)\}$. Then we have
\begin{align*}
F(t_1) &\leq \int_{t_0}^{t_1}\Pi(-\log_2 F(\tau))F(\tau)\text{d}\tau\\
&\leq \int_{t_0}^{t_1}\Pi(-\log_2 \eta(\tau,\delta))\eta(\tau,\delta)\text{d}\tau\\
&< \delta + \int_{t_0}^{t_1}\Pi(-\log_2 \eta(\tau,\delta))\eta(\tau,\delta)\text{d}\tau\\
&= \eta(t_1,\delta),
\end{align*}
contradicting the definition of $t_1$. Finally, choose $M_1$  such that (\ref{2assump2}) holds for $\alpha \geq M_1$. As $\delta\rightarrow0^+$, we must have that $F \equiv 0$ on $[t_0, t_0 +\eps)$, contradicting the definition of $t_0$. This implies that $t_0 = T$, and uniqueness is proven.
\end{proof}

\section{Construction of the flow}\label{ExSec}
Let $\Gamma$, $\Gamma_1$ satisfy (i)-(vi). This section is dedicated to proving the following theorems:
\begin{thm}\label{existthm1}
For $1 < p_0 < 2 < p_1 < \infty$, let $f \in B_\Gamma \cap \mathrm{L}^{p_0} \cap \mathrm{L}^{p_1}$ and
$g \in \mathrm{W}^{1,p_0} \cap \mathrm{W}^{1,p_1}$ such that $\nabla g \in B_\Gamma$. Assume that $(\alpha +2)\Gamma'(\alpha) \leq C \text{ for a.e. } \alpha \in [-1,\infty)$. Then there exists a $T > 0$ (depending on $\Gamma$, $f$ and $g$) and a solution $(u,\rho)$ to the system of equations $(B_{\kappa, 0})$ with $u = \mathcal{K} * \omega$, such that
\begin{align}
\omega(\;\cdot\;) &\in \mathrm{L}^\infty ([0,T]; \mathrm{L}^{p_0} \cap \mathrm{L}^{p_1}) \cap C_{w^*}([0,T]; B_{\Gamma_1}),\\
\nabla \rho(\;\cdot\;) &\in \mathrm{L}^\infty ([0,T]; \mathrm{L}^{p_0} \cap \mathrm{L}^{p_1}) \cap C_{w^*}([0,T]; B_\Gamma).
\end{align}
\end{thm}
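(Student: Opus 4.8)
The plan is to realize the solution as a weak-$*$ limit of smooth approximate solutions, with the uniform control supplied by the a priori estimates of Theorem \ref{apriorithm}. First I would regularize the data by frequency truncation, setting $f^n = S_n f$ and $g^n = S_n g$. Since $p_0 < 2 < p_1$, interpolation gives $f \in \mathrm{L}^2$, so each $f^n$ is band-limited and hence lies in $H^s$ for every $s$; likewise $g^n$. Thus for each $n$ the classical theory furnishes a smooth local-in-time solution $(u^n, \rho^n)$ of $(B_{\kappa,0})$ with $u^n = \mathcal{K} * \omega^n$, $\omega^n(0) = f^n$, $\rho^n(0) = g^n$. The truncations converge, $f^n \to f$ and $\nabla g^n \to \nabla g$ in $\mathrm{L}^{p_0}\cap\mathrm{L}^{p_1}$, and, because $S_n$ does not enlarge the relevant norms, they are bounded uniformly: $\norm{f^n}_\Gamma \leq C\norm{f}_\Gamma$ and $\norm{\nabla g^n}_\Gamma \leq C\norm{\nabla g}_\Gamma$.

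Second, I would invoke Theorem \ref{apriorithm} (part~1, under the standing hypothesis $(\alpha+2)\Gamma'(\alpha)\leq C$) for each $(u^n,\rho^n)$. Because the norms of $(f^n,g^n)$ are dominated uniformly by those of $(f,g)$, the existence time $T$ and the constant it produces depend only on $\Gamma$, $f$, and $g$, and in particular are independent of $n$. This yields, on a common interval $[0,T]$,
\begin{equation*}
\begin{aligned}
\sup_n \Big( &\norm{\omega^n}_{\mathrm{L}^\infty([0,T]; \mathrm{L}^{p_0}\cap\mathrm{L}^{p_1})} + \norm{\omega^n}_{\mathrm{L}^\infty([0,T]; B_{\Gamma_1})} \\
&+ \norm{\nabla\rho^n}_{\mathrm{L}^\infty([0,T]; \mathrm{L}^{p_0}\cap\mathrm{L}^{p_1})} + \norm{\nabla\rho^n}_{\mathrm{L}^\infty([0,T]; B_\Gamma)} \Big) \leq C.
\end{aligned}
\end{equation*}
Through the Biot--Savart law this also bounds $u^n$ uniformly in $\mathrm{L}^\infty([0,T]; \mathrm{W}^{1,p_0}_{\mathrm{loc}}\cap \mathrm{W}^{1,p_1}_{\mathrm{loc}})$ and, as used repeatedly in Section \ref{APSec}, in $\mathrm{L}^\infty([0,T]; B^0_{\infty,1})$.

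Third comes compactness and the passage to the limit. Banach--Alaoglu extracts a subsequence with $\omega^n \rightharpoonup \omega$ and $\nabla\rho^n \rightharpoonup \nabla\rho$ weak-$*$ in the spaces above. The difficulty is the nonlinear transport terms, which I would rewrite in divergence form using $\mathrm{div}\,u^n = 0$, as $(u^n,\nabla)\omega^n = \mathrm{div}(u^n\omega^n)$ and $(u^n,\nabla)\rho^n = \mathrm{div}(u^n\rho^n)$. Since $\mathcal{K}$ gains one derivative, $u^n$ is bounded in $\mathrm{L}^\infty([0,T]; \mathrm{W}^{1,p_0}_{\mathrm{loc}})$ while the momentum equation bounds $\partial_t u^n$ in a space of negative regularity; the Aubin--Lions--Simon lemma then gives strong convergence $u^n \to u$ in $C([0,T]; \mathrm{L}^{p_0}_{\mathrm{loc}})$ along a further subsequence, and similarly $\rho^n \to \rho$ locally. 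Pairing this strong convergence against the weak-$*$ convergence of $\omega^n$ and $\rho^n$ identifies the distributional limits of $u^n\omega^n$ and $u^n\rho^n$, so $(u,\rho)$ solves $(B_{\kappa,0})$ with $u = \mathcal{K}*\omega$.

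Finally I would upgrade the time-regularity to the stated $C_{w^*}$ statements. The vorticity equation shows $\partial_t\omega = \partial_1\rho - \mathrm{div}(u\omega)$ lies in $\mathrm{L}^\infty([0,T];Y)$ for a suitable larger space $Y$, whence $\omega \in C([0,T];Y)$; combined with the uniform bound $\omega \in \mathrm{L}^\infty([0,T];B_{\Gamma_1})$ and a density argument (testing against the predual of $B_{\Gamma_1}$), this upgrades to $\omega \in C_{w^*}([0,T];B_{\Gamma_1})$, and the analogous treatment of $\partial_t\nabla\rho = \kappa\Delta\nabla\rho - \nabla\,\mathrm{div}(u\rho)$ gives $\nabla\rho \in C_{w^*}([0,T];B_\Gamma)$. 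The main obstacle throughout is precisely the passage to the limit in the transport terms: no compactness is available at the level of $\omega^n$ in the borderline spaces $B_\Gamma$ and $B_{\Gamma_1}$, and the argument succeeds only by transferring the needed compactness to $u^n$ via the smoothing of the Biot--Savart kernel. A secondary technical point, required to make the $C_{w^*}$ statements meaningful, is the identification of a predual of $B_\Gamma$ and $B_{\Gamma_1}$; once that is in hand, the continuity in time is routine given the bound on the time derivative. If $\Gamma_1$ (or $\Gamma$) additionally meets the hypotheses on $\Pi$ in Section \ref{USec}, then Theorem \ref{uthm} applies to the limit and promotes subsequential convergence to convergence of the full family.
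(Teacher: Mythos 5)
Your proposal is correct in outline, but it diverges from the paper's proof at the decisive step: how the approximate solutions converge. You and the paper both truncate the data with $S_n$, solve via Chae's result (Proposition \ref{Chaethm}), and get uniform bounds on a common $[0,T]$ from Theorem \ref{apriorithm}. From there the paper does \emph{not} use weak compactness at all: it proves that $\{u_m\}$ and $\{\rho_m\}$ form a Cauchy sequence in $\mathrm{L}^\infty([0,T];B^0_{\infty,1})$ by running the stability estimate of the uniqueness proof (the commutator bounds of Theorem \ref{Comthm}, the tail estimate $\iota_{m,l}\leq C2^{-l}\Gamma(l)$ coming from condition (iii), and an Osgood/ODE-comparison argument for $F(t)\leq C2^{-l}\Gamma(l)+C\int_0^t\Gamma_1(-\log_2 F)F\,\mathrm{d}\tau$). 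This yields strong, uniform-in-space convergence of the \emph{whole} sequence, so the limit passage in the nonlinear terms is immediate, and the convergence in $C([0,T];B^{-1}_{\infty,1})$ feeds directly into the predual/density argument for $C_{w^*}$. Your route --- Banach--Alaoglu plus Aubin--Lions for $u^n$, then strong-times-weak pairing to identify $u^n\omega^n$ and $u^n\rho^n$ --- is a legitimate and more standard alternative; it avoids redoing the commutator machinery, but it delivers only a subsequence (you correctly flag that uniqueness would repair this, though that imports the hypotheses of Section \ref{USec}), it requires controlling $\partial_t u^n$ through the pressure (routine, but you should either apply the Leray projector or restrict to divergence-free test functions in the Aubin--Lions step), and it needs the interpolation remark made explicit: strong convergence of $u^n$ in $C([0,T];\mathrm{L}^{p_0}_{\mathrm{loc}})$ must be upgraded via the uniform $\mathrm{L}^\infty$ bound to $\mathrm{L}^{p_1'}_{\mathrm{loc}}$ before pairing with $\omega^n\rightharpoonup\omega$ in $\mathrm{L}^{p_1}$. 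Your treatment of the $C_{w^*}$ claims (Lipschitz bound on $\langle\omega(t),\tilde h\rangle$ from the equation, plus density of band-limited functions in the predual $H_{\Gamma_1}$ and the uniform $B_{\Gamma_1}$ bound) matches the paper's in spirit and is fine. In short: the paper buys strong convergence of the full family and a one-shot limit passage at the price of reusing the delicate Besov commutator estimates; you buy modularity and standard machinery at the price of subsequences and extra compactness bookkeeping.
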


\begin{thm}\label{existthm2}
Let $f$ and $g$ be as in Theorem \ref{existthm1}. Assume that $\Gamma'(\alpha)\Gamma_1(\alpha)\leq C \text{ for a.e. }\\
 \alpha \in [-1,\infty)$. Then there exist $(u,\rho)$ solving $(B_{\kappa, 0})$ such that
\begin{align}
\omega(\;\cdot\;) &\in \mathrm{L}^\infty_{\text{loc}} ([0,\infty); \mathrm{L}^{p_0} \cap \mathrm{L}^{p_1}) \cap C_{w^*}([0,\infty); B_{\Gamma_1}),\\
\nabla \rho(\;\cdot\;) &\in \mathrm{L}^\infty_{\text{loc}} ([0,\infty); \mathrm{L}^{p_0} \cap \mathrm{L}^{p_1}) \cap C_{w^*}([0,\infty); B_\Gamma).
\end{align}
\end{thm}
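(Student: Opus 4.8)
The plan is to run the standard vanishing-mollification (or truncation/iteration) scheme that produces a sequence of approximate solutions, then pass to the limit using the uniform a priori bounds already established. Concretely, I would regularize the data and the nonlinearity—for instance by setting $\omega_0^{(n)} = S_n \omega_0$, $\rho_0^{(n)} = S_n \rho_0$ and solving a mollified system $(B_{\kappa,0}^{(n)})$ in which the transport velocity is smoothed (e.g.\ replacing $(u,\nabla)$ by $(S_n u, \nabla)$ or convolving $u$ against a mollifier $\chi_n$). For each fixed $n$ this regularized problem has smooth global-in-time solutions $(u^{(n)},\rho^{(n)})$ by classical theory, since the smoothing turns the transport terms into Lipschitz vector fields and the diffusion $\kappa\Delta\rho$ supplies parabolic smoothing for the density. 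The key point is that the smoothing is chosen so that the a priori estimates of Theorem \ref{apriorithm} and Theorem \ref{densitythm} apply \emph{uniformly in} $n$: because we are in case (2), assumption (\ref{Gamma2}) holds and the bounds in Theorem \ref{apriorithm}.2 are valid on every compact time interval $[0,T]$, $T$ arbitrary. Thus $\omega^{(n)}$ is bounded in $\mathrm{L}^\infty_{\text{loc}}([0,\infty); \mathrm{L}^{p_0}\cap\mathrm{L}^{p_1}\cap B_{\Gamma_1})$ and $\nabla\rho^{(n)}$ in $\mathrm{L}^\infty_{\text{loc}}([0,\infty); \mathrm{L}^{p_0}\cap\mathrm{L}^{p_1}\cap B_{\Gamma})$, with bounds depending only on the norms of $f,g$ and on $\Gamma,\Gamma_1$, not on $n$.

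Next I would extract a convergent subsequence. The uniform bounds give weak-$*$ compactness in the relevant dual spaces; the $\mathrm{L}^{p_0}\cap\mathrm{L}^{p_1}$ control on $\omega^{(n)}$ together with the Biot--Savart law $u^{(n)}=\mathcal{K}*\omega^{(n)}$ yields uniform bounds on $u^{(n)}$ and its gradient, which in turn bound $\partial_t\omega^{(n)} = \partial_1\rho^{(n)} - (u^{(n)},\nabla)\omega^{(n)}$ and $\partial_t\rho^{(n)} = \kappa\Delta\rho^{(n)} - (u^{(n)},\nabla)\rho^{(n)}$ in some negative-index space, uniformly in $n$. An Aubin--Lions--Simon argument then upgrades the weak convergence to strong convergence of $(u^{(n)},\rho^{(n)})$ in, say, $C_{\text{loc}}([0,\infty); \mathrm{L}^{q}_{\text{loc}})$ for suitable $q$, which is exactly what is needed to pass to the limit in the quadratic transport terms $(u^{(n)},\nabla)\omega^{(n)}$ and $(u^{(n)},\nabla)\rho^{(n)}$ and recover a genuine (weak/distributional) solution of $(B_{\kappa,0})$. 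The weak-$*$ continuity in time, $\omega \in C_{w^*}([0,\infty); B_{\Gamma_1})$ and $\nabla\rho \in C_{w^*}([0,\infty); B_\Gamma)$, follows from the uniform bound in these spaces combined with strong continuity in a weaker topology, by a standard interpolation/density argument (e.g.\ the Strauss lemma characterizing weak-$*$ continuity).

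Finally, since Theorem \ref{uthm} (with $\Pi$ chosen to dominate $\Gamma_1$, so that its hypotheses (\ref{eq3.0.1})--(\ref{eq3.0.4}) are met by the constructed solution, noting assumption (vi) guarantees the Osgood condition \eqref{2assump1}) already delivers uniqueness, the limit is the unique solution and the whole sequence—not merely a subsequence—converges. The main obstacle I anticipate is not the compactness itself but ensuring that the regularization preserves the uniform $B_{\Gamma_1}$ and $B_\Gamma$ bounds: the a priori estimates of Section \ref{APSec} rely on the precise structure of the vorticity equation, the commutator estimate of Theorem \ref{Comthm}, and Vishik's flow-map estimate (Proposition \ref{vprop1}), and one must check that the mollified flow maps $X_{u^{(n)}}$ still satisfy Proposition \ref{vprop1} with constants independent of $n$. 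Choosing a mollification that commutes well with the Littlewood--Paley projections (so that the paraproduct and commutator bounds go through verbatim for the approximate system) is the delicate step; once that is secured, Theorem \ref{existthm1} is the $T$-finite analogue obtained by the same scheme under the weaker assumption (\ref{Gamma1}), and Theorem \ref{existthm2} follows by exhausting $[0,\infty)$ with the globally valid bounds of case (2).
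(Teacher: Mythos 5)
Your overall scheme (regularize, obtain uniform bounds, extract a limit by compactness, invoke uniqueness) is reasonable in outline, but it diverges from the paper at exactly the point you yourself flag as delicate, and that point is a genuine gap rather than a routine verification. You propose to mollify the equation itself (replacing $(u,\nabla)$ by $(S_n u,\nabla)$ or convolving $u$ with a mollifier), and you would then need the entire a priori machinery of Section \ref{APSec} --- the vorticity transport identity, the paraproduct bounds, Theorem \ref{Comthm}, and above all Vishik's flow-map estimate (Proposition \ref{vprop1}) --- to hold for the \emph{modified} system with constants independent of $n$. None of that is established for a mollified equation, and Proposition \ref{vprop1} in particular is stated for flow maps generated by solutions of $(B_{\kappa,0})$ itself. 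The paper avoids the issue entirely: the equation is never touched; only the data is truncated, $\omega_m(0)=S_m f$ and $\nabla\rho_m(0)=S_m\nabla g$, and Chae's global well-posedness in $H^r$, $r>2$ (Proposition \ref{Chaethm}) supplies exact smooth solutions of $(B_{\kappa,0})$ to which Theorem \ref{apriorithm} applies verbatim, uniformly in $m$. If you retain the mollified system you must re-derive all of Section \ref{APSec} uniformly in $n$; if you switch to data truncation plus Chae's theorem, the gap disappears.

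The second divergence is the limit passage. You rely on Aubin--Lions compactness plus uniqueness of the limit point; the paper instead shows that $\{u_m\},\{\rho_m\}$ is a Cauchy sequence in $\mathrm{L}^\infty([0,T];B^0_{\infty,1})$ by rerunning the Osgood/Gronwall argument of Section \ref{USec} with a nonzero initial difference of size $C2^{-l}\Gamma(l)$ (this is where assumption (vi), the Osgood condition on $\Gamma_1$, enters). The Cauchy-sequence route yields strong convergence of the whole sequence with no subsequence extraction and feeds directly into the duality argument, via the explicit predual $H_{\Gamma_1}$ and a density argument in $B^1_{1,1+\delta^{-1}}$, that establishes weak-$*$ continuity in $B_{\Gamma_1}$ and $B_\Gamma$; your appeal to a Strauss-type lemma would still require identifying that predual. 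Finally, note that the paper's entire proof of Theorem \ref{existthm2} consists of the observation that one repeats the proof of Theorem \ref{existthm1} with $T$ arbitrary, using case (2) of Theorem \ref{apriorithm} under assumption (\ref{Gamma2}); your closing remark about exhausting $[0,\infty)$ with the globally valid bounds is consistent with that.
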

\noindent It should be noted that $C_{w^*}([0,T];B_{\Gamma_1})$ is the space of weak-* continuous functions with values in $B_{\Gamma_1}$ in the sense of duality, $H_{\Gamma_1}' = B_{\Gamma_1}$. We define the predual $H_{\Gamma_1}$ of $B_{\Gamma_1}$ as:
\begin{align*}
H_{\Gamma_1}& =\left\{ f \in \mathcal{S}' \;\;\vline \;\;\exists \;\{d_j\}_{j=-1}^\infty,\; d_j \geq 0,\; \sum_{j=-1}^\infty d_j < \infty\right.\\
&\hspace{50pt}\left.\text{ and } \norm{\Delta_m f}_1 \leq \sum_{j\geq m} d_j \Gamma_1(j)^{-1} \;\forall \; m\geq -1 \right\}\end{align*}
and state without proof that the dual of $H_{\Gamma_1}$ is isomorphic to $B_{\Gamma_1}$ (For related discussion, see ~\cite{MR0461123}, Chapter 3).

\begin{proof}[Proof of Theorem \ref{existthm1}]
We use approximation by Sobolev-regular solutions to prove the existence theorem, for which we will need the following result from  ~\cite{MR2227730}:
\begin{prop}[Chae]\label{Chaethm}
Let $\kappa > 0$ be fixed, and div $u_0=0$. Let $r >2$ be an integer, and $(u_0, \rho_0) \in H^r(\Real^2)$.
Then there exists a unique solution $(u,\rho)$ to $(B_{\kappa, 0})$ with\\
$u \in C([0,\infty); H^r(\Real^2))$ and $\rho \in C([0,\infty); H^r(\Real^2)) \cap \mathrm{L}^2 ([0,T]; H^{r+1}(\Real^2))$.
\end{prop}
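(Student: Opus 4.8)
The plan is to combine a local existence-and-uniqueness theory with a continuation criterion of Beale--Kato--Majda type, and then to produce global-in-time (i.e. finite-interval) a priori bounds in $H^r$; since $(B_{\kappa,0})$ is semilinear (transport coupled with a genuine Laplacian in the density), this is a standard if delicate energy-method argument. First I would regularize, mollifying the transport operators (replacing $(u,\nabla)$ by $(J_\eps u,\nabla)J_\eps$) to obtain globally defined approximate solutions smooth in $H^r$, the mollified density equation retaining its dissipative structure. Uniform-in-$\eps$ energy estimates give a local existence time $T_*$ depending only on $\norm{u_0}_{H^r}+\norm{\rho_0}_{H^r}$, and passing to the limit by Aubin--Lions compactness yields a local solution with the stated regularity (the density gaining one derivative in $L^2_t H^{r+1}$ from the parabolic term). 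Uniqueness follows from an $L^2$ estimate on the difference $(v,\sigma)=(u_1-u_2,\rho_1-\rho_2)$: the pressure drops out upon testing against $v$, incompressibility disposes of the transport terms, and the coupling $\int\sigma v_2$ together with $\int(v,\nabla)u_2\cdot v$ are closed by Gronwall using $\nabla u_2\in L^\infty$. This reduces global existence to showing $\int_0^T\norm{\nabla u}_\infty\,dt<\infty$ on every finite interval.

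The heart of the matter is a ladder of a priori bounds exploiting the two-dimensional structure and the density diffusion. Testing the density equation against $\rho$ reproduces the exact balance $\norm{\rho(t)}_2^2+2\kappa\int_0^t\norm{\nabla\rho}_2^2=\norm{\rho_0}_2^2$ of (\ref{eq1.0d}); testing the velocity equation against $u$ gives $\frac{d}{dt}\norm{u}_2\le\norm{\rho}_2\le\norm{\rho_0}_2$, so $\norm{u}_2$ grows at most linearly. For the vorticity, $\partial_t\omega+(u,\nabla)\omega=\partial_1\rho$ yields $\frac{d}{dt}\norm{\omega}_2\le\norm{\nabla\rho}_2$, and the $L^2_t$ control of $\nabla\rho$ gives a finite-interval bound on $\norm{\omega}_2\sim\norm{\nabla u}_2$. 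Next, applying $\nabla$ to the density equation and testing against $\nabla\rho$, incompressibility kills the worst term and the Ladyzhenskaya inequality $\norm{\nabla\rho}_4^2\le C\norm{\nabla\rho}_2\norm{\Delta\rho}_2$ lets me absorb the remainder into $\kappa\norm{\Delta\rho}_2^2$; Gronwall against the already-bounded $\norm{\nabla u}_2^2$ then delivers finite-interval control of $\norm{\nabla\rho}_2$ together with $\int_0^t\norm{\Delta\rho}_2^2$.

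The main obstacle is controlling $\int_0^T\norm{\nabla u}_\infty\,dt$, precisely because the velocity carries no diffusion while its vorticity is forced by a full derivative of $\rho$. I would close this through the logarithmic Sobolev (BKM) inequality $\norm{\nabla u}_\infty\le C\big(1+(1+\norm{\omega}_\infty)\log(e+\norm{u}_{H^r})\big)$, which shifts the burden to bounding $\int_0^T\norm{\omega}_\infty\,dt$. From the vorticity equation $\norm{\omega(t)}_\infty\le\norm{\omega_0}_\infty+\int_0^t\norm{\nabla\rho}_\infty\,ds$, so it suffices to bound $\norm{\nabla\rho}_\infty$ in $L^1_t$, and here the density diffusion is indispensable: writing the density equation via Duhamel as $\rho(t)=e^{\kappa t\Delta}\rho_0-\int_0^t e^{\kappa(t-s)\Delta}(u,\nabla)\rho\,ds$ and using parabolic smoothing (Proposition \ref{Cheminprop} summed over frequencies, or maximal regularity) converts the $L^2_t$ bound on $\nabla\rho$ and the finite-interval $\norm{\omega}_2$ bound into a finite-interval bound on $\int_0^T\norm{\nabla\rho}_\infty\,dt$. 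With $\int_0^T\norm{\omega}_\infty\,dt<\infty$ in hand, the top-order estimate --- applying $\Lambda^r$ to both equations, invoking the Kato--Ponce commutator estimates, and absorbing the $\norm{\nabla\rho}_\infty$-type terms into the dissipation $\kappa\norm{\rho}_{H^{r+1}}^2$ --- takes the form $\frac{d}{dt}E\le C(1+\norm{\nabla u}_\infty)E$ with $E=\norm{u}_{H^r}^2+\norm{\rho}_{H^r}^2$, and an Osgood/Gronwall argument feeding back the BKM inequality yields a finite-interval bound on $E$. This verifies the continuation criterion on every $[0,T]$, giving the global solution $u\in C([0,\infty);H^r)$ and $\rho\in C([0,\infty);H^r)\cap L^2([0,T];H^{r+1})$.
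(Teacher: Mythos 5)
The paper does not prove this proposition at all: it is imported verbatim from Chae's paper \cite{MR2227730} (global regularity for the 2D Boussinesq equations with partial viscosity), so there is no in-paper argument to compare yours against. Judged on its own terms, your sketch is a faithful reconstruction of Chae's actual proof and is sound: mollification plus uniform $H^r$ energy estimates for local existence, an $L^2$ difference estimate for uniqueness, the energy ladder $\norm{\rho}_2 \to \norm{u}_2,\norm{\omega}_2 \to \norm{\nabla\rho}_{L^\infty_t L^2}\cap\norm{\Delta\rho}_{L^2_t L^2}$ using incompressibility and Ladyzhenskaya, and then the Beale--Kato--Majda logarithmic inequality to reduce everything to $\int_0^T\norm{\omega}_\infty\,dt$, hence to $\int_0^T\norm{\nabla\rho}_\infty\,dt$. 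You correctly identify that last integral as the crux, and it is the one place where your write-up glosses a real difficulty: in two dimensions $H^1\not\hookrightarrow L^\infty$, so the bound $\rho\in L^2_tH^2$ does not by itself give $\nabla\rho\in L^2_tL^\infty$, and the naive semigroup estimate $\norm{\nabla e^{\tau\Delta}\operatorname{div}(u\rho)}_\infty\lesssim\tau^{-1-1/q}\norm{u\rho}_q$ is not time-integrable. One must either climb one more rung of the energy ladder (to $\rho\in L^2_tH^3$, as Chae does), or use genuine maximal $L^2_tW^{2,q}$ regularity with some $q>2$ (available since $(u,\nabla)\rho\in L^2_tL^q$ for all $q<\infty$, and $W^{2,q}(\Real^2)\hookrightarrow W^{1,\infty}$ for $q>2$), or the frequency-localized smoothing of Proposition \ref{Cheminprop} summed with the paraproduct bounds --- which is in fact exactly the mechanism the paper deploys in Section \ref{APSec} for the Besov-level analogue of this step. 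Since you name both the Duhamel/Chemin route and maximal regularity as the intended tools, I would count this as a correct outline rather than a gap, but in a full write-up that step deserves the detail.
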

\noindent Following ~\cite{MR1851996}, we have that for $f\in \mathcal{S}'$, the $H^r$ Sobolev norm is equivalent to the following Littlewood-Paley decomposition:
\[
\norm{\Delta_{-1} f}_2 + \left(\sum_{k=0}^\infty 2^{2kr}\norm{\Delta_k f}_2^2\right)^{\frac{1}{2}}
\]
A simple application of the Cauchy-Schwarz inequality gives:
\begin{prop}\label{embed}
For $r > 1$, $H^r(\Real^2) \subset B_\Gamma(\Real^2).$
\end{prop}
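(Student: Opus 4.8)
The plan is to reduce membership in $B_\Gamma$ to membership in the smaller space $B^0_{\infty, 1}$ and then exploit the Littlewood-Paley characterization of $H^r$ together with Bernstein's inequality. Since $\Gamma(N) \geq 1$ for every $N$ by condition (i), for any $f$ one has
\[
\norm{f}_\Gamma = \sup_{N \geq -1}(\Gamma(N))^{-1}\sum_{j=-1}^N \norm{\Delta_j f}_\infty \leq \sum_{j=-1}^\infty \norm{\Delta_j f}_\infty = \norm{f}_{B^0_{\infty,1}}.
\]
Thus it suffices to establish the embedding $H^r(\Real^2) \hookrightarrow B^0_{\infty, 1}(\Real^2)$ for $r > 1$; the full strength of conditions (i)--(iii) on $\Gamma$ is not needed, only the normalization $\Gamma \geq 1$.

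To bound the $B^0_{\infty,1}$ norm, I would first apply Bernstein's inequality to each frequency block. In dimension $n = 2$ this gives $\norm{\Delta_{-1} f}_\infty \leq C\norm{\Delta_{-1} f}_2$ and $\norm{\Delta_k f}_\infty \leq C 2^{k}\norm{\Delta_k f}_2$ for $k \geq 0$, the exponent $2^{k}$ arising as $2^{kn/2}$ with $n = 2$. Summing over frequencies yields
\[
\sum_{j=-1}^\infty \norm{\Delta_j f}_\infty \leq C\norm{\Delta_{-1} f}_2 + C\sum_{k=0}^\infty 2^{k}\norm{\Delta_k f}_2.
\]

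The decisive step is the Cauchy-Schwarz inequality applied to the high-frequency sum after inserting the factor $2^{kr}2^{-kr}$:
\[
\sum_{k=0}^\infty 2^{k}\norm{\Delta_k f}_2 = \sum_{k=0}^\infty 2^{k(1-r)}\left(2^{kr}\norm{\Delta_k f}_2\right) \leq \left(\sum_{k=0}^\infty 2^{2k(1-r)}\right)^{1/2}\left(\sum_{k=0}^\infty 2^{2kr}\norm{\Delta_k f}_2^2\right)^{1/2}.
\]
The second factor is controlled by the Littlewood-Paley form of $\norm{f}_{H^r}$ recalled above, while the first factor is a geometric series that converges precisely when $2(1-r) < 0$, i.e. when $r > 1$. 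Combining this with the low-frequency term yields $\norm{f}_{B^0_{\infty, 1}} \leq C_r \norm{f}_{H^r}$, and the reduction of the first paragraph then gives $\norm{f}_\Gamma \leq C_r \norm{f}_{H^r} < \infty$, so $f \in B_\Gamma$.

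There is no serious obstacle here: the content is simply the bookkeeping that makes $r > 1$ the sharp threshold. The only point requiring care is that the threshold is dictated by the dimension through Bernstein's inequality — the exponent $2^{k}$ is $2^{kn/2}$ for $n = 2$, so the statement behind this is really $H^r(\Real^n) \hookrightarrow B^0_{\infty, 1}(\Real^n)$ for $r > n/2$, and the convergence of the geometric series $\sum_k 2^{2k(n/2 - r)}$ is exactly the condition $r > n/2$, which specializes to $r > 1$ in the plane.
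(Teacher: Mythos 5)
Your proof is correct and is exactly the argument the paper intends: the paper cites the Littlewood--Paley characterization of the $H^r$ norm and states that the proposition follows from ``a simple application of the Cauchy--Schwarz inequality,'' which is precisely your Bernstein-plus-Cauchy--Schwarz computation showing $H^r \hookrightarrow B^0_{\infty,1} \subset B_\Gamma$ for $r > 1 = n/2$. Your added remark identifying $r > n/2$ as the dimensional threshold is a correct and useful clarification, not a deviation.
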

For any $m\geq 1$, we construct the solution $(u_m,\rho_m)$ given by Proposition \ref{Chaethm} such that
\begin{equation}\label{eq4.1}
\omega_m(0) = S_m f \in \cap_{r>2} H^r\;\text{ and } \nabla \rho_m(0) = S_m \nabla g \in \cap_{r>2} H^r.
\end{equation}
Since $\norm{S_m h}_p \leq \norm{h}_p$ for $p\in [1,\infty]$ and any $h \in \mathcal{S}'$ by definition of the $S_m$ operator, we have
\begin{equation}\label{eq4.2}
 \norm{\omega_m (0)}_{p_0},\;\; \norm{\nabla \rho_m (0)}_{p_0}\leq C \;\text{and } \norm{\omega_m (0)}_{p_1},\;\; \norm{\nabla \rho_m (0)}_{p_1}\leq C.
\end{equation}
Furthermore, the definition of $\Delta_j$ and $S_m$ give us
\begin{equation}\label{eq4.3}
\norm{\omega_m(0)}_\Gamma \leq C\norm{f}_\Gamma, \hspace{15pt} \norm{\nabla \rho_m (0)}_\Gamma \leq C \norm{\nabla g}_\Gamma.
\end{equation}
Combined with Propositions \ref{Chaethm} and \ref{embed} we conclude that
\begin{align}\label{eq4.4}
&\omega_m(\cdot) \in \mathrm{L}^\infty_{loc} ([0,\infty); B_{\Gamma_1})\\
&\nonumber \nabla \rho_m(\cdot) \in \mathrm{L}^\infty_{loc} ([0,\infty); B_\Gamma).
\end{align}
Furthermore, using Theorem \ref{apriorithm} along with (\ref{eq4.2}), (\ref{eq4.3}) and (\ref{eq4.4}) we conclude that there is a $T > 0$ such that
\begin{align}\label{eq4.5}
&\omega_m(\cdot), \nabla \rho_m (\cdot) \in \mathrm{L}^\infty([0,T]; \mathrm{L}^{p_0} \cap \mathrm{L}^{p_1}),\\
&\nonumber \omega_m(\cdot) \in \mathrm{L}^\infty([0,T]; B_{\Gamma_1}),\\
&\nonumber \nabla \rho_m (\cdot) \in \mathrm{L}^\infty ([0,T];B_\Gamma).
\end{align}

Fix two indices, $m$ and $l$. Then we set
\begin{equation}\label{eq4.6}
\begin{array}{cc}
u_{\{m,l\}} = \mathcal{K} * \omega_{\{m,l\}}, & \omega = \omega_m - \omega_l\\
v = u_m - u_l, & \rho = \rho_m - \rho_l.\\
\end{array}
\end{equation}
We use the same estimate as in the uniqueness proof of Section \ref{USec}, only in this case we cannot assume that $\Delta_j v(0)$ and $\Delta_j  \rho (0)$ are zero. To wit, we have for any $N \geq 1$,
\begin{align}\label{eq4.7}
& \sum_{j=-1}^N \left(\norm{\Delta_j v(t)}_\infty + \norm{\Delta_j \rho(t)}_\infty\right)\\
&\nonumber \hspace{40pt}\leq \sum_{j=-1}^N \left(\norm{\Delta_j v(0)}_\infty + \norm{\Delta_j \rho(0)}_\infty\right)\\
&\nonumber \hspace{50pt}+ C\Gamma_1(N)\int_0^t \sum_{j=-1}^\infty (\norm{\Delta_j v(\tau)}_\infty + \norm{\Delta_j \rho (\tau)}_\infty) \text{d}\tau\\
&\nonumber \hspace{50pt} + C2^{-N}\Gamma_1(N).
\end{align}
As in the previous section, we define
\begin{equation*}
 F(t) = \int_0^t \sum_{j=-1}^\infty \left(\norm{\Delta_j v(\tau)}_\infty + \norm{\Delta_j \rho(\tau)}_\infty\right)\text{d}\tau,
\end{equation*}
which, for fixed $t \in [0,T]$ and $N = \max\{1,\lceil -\log_2 F(t) \rceil\}$ allows us to write (\ref{eq4.7}) as
\begin{equation}\label{eq4.8}
F'(t) \leq \sum_{j=-1}^N \left(\norm{\Delta_j v(0)}_\infty + \norm{\Delta_j \rho(0)}_\infty\right) + C \Gamma_1(-\log_2 F(t))F(t).
\end{equation}
Denote the first term on the right hand side by
\[\kappa_{m,l} + \iota_{m,l} := \sum_{j=-1}^N \norm{\Delta_j v(0)}_\infty + \sum_{j=-1}^N\norm{\Delta_j \rho(0)}_\infty. \]
The bounds on $\kappa_{m,l}$ and $\iota_{m,l}$ are similar so we demonstrate only the latter. To bound $\iota_{m,l}$, we first write
\begin{align*}
\iota_{m,l} &= \sum_{j=-1}^N \norm{\Delta_j \rho(0)}_\infty\\
\nonumber &\leq \sum_{j=-1}^N 2^{-j}\norm{\Delta_j (S_m-S_l)g}_\infty\\
\nonumber &= \sum_{j=-1}^N 2^{-j}\norm{\Delta_j (\sum_{k = l+1}^m \Delta_k g)}_\infty\\
\nonumber &\leq \sum_{k=l+1}^m \sum_{|k-j|\leq M_0} 2^{-k}\norm{\Delta_j \Delta_k g}_\infty\\
\nonumber &\leq C\sum_{k=l+1}^\infty  2^{-k}\norm{\Delta_k g}_\infty.
\end{align*}
Using an Abel's Lemma argument identical to (\ref{eq3.10}), we conclude that
\begin{equation}\label{eq4.9}
\iota_{m,l} \leq C 2^{-l} \Gamma(l).
\end{equation}
After integrating (\ref{eq4.8}) in time, we use (\ref{eq4.9}) to write
\[ F(t) \leq C 2^{-l}\Gamma(l) + C \int_0^t \Gamma_1(-\log_2 F(\tau))F(\tau)\text{d}\tau.\]
As in the proof of uniqueness, we have that $\Gamma_1(-\log_2 F)F$ is monotonically nondecreasing for small $F \geq 0$. If we let $\eta$ solve the ODE:
\[\left\{\begin{array}{l}
    \dot{\eta} = C \Gamma_1(-\log_2 \eta)\eta\\
    \eta(0) = C 2^{-l}\Gamma(l)\\
\end{array}\right.\]
Then a simple Gronwall argument gives $F(t) \leq \eta(t, C2^{-l}\Gamma(l))$ for $t\in [0,T]$. Combined with (\ref{eq4.8}), we have
\begin{equation}\label{eq4.10}
F'(t) \leq C 2^{-l}\Gamma(l) + C \Gamma_1[-\log_2 \eta(t, C2^{-l}\Gamma(l))]\eta(t, C2^{-l}\Gamma(l))
\end{equation}
for all $t\in [0,T]$. This implies that $\{u_m\}$ and $\{\rho_m\}$ are Cauchy sequences in the Banach space $\mathrm{L}^\infty([0,T];B_{\infty, 1}^0)$. Therefore, there exists $u, \rho$  such that:
\begin{equation}\label{eq4.11}
u_m \rightarrow u,\;\; \rho_m \rightarrow \rho \in \mathrm{L}^\infty([0,T];B_{\infty, 1}^0).
\end{equation}
As we show next, this in fact implies that for $ \omega = \text{curl } u$,
\begin{equation}\label{eq4.12}
\norm{\omega}_{\Gamma_1}, \norm{\nabla \rho}_\Gamma \in \mathrm{L}^\infty([0,T]).
\end{equation}
Define the seminorm $\nu_N$ on $\mathrm{L}^\infty([0,T];B_{\infty, 1}^0)$ given by
\begin{equation*}
\nu_N(f) = \norm{ \sum_{j=-1}^N \norm{\Delta_j f(\cdot)}_\infty}_{\mathrm{L}^\infty ([0,T])}.
\end{equation*}
By (\ref{eq4.11}), it is clear that $\nu_N(u_m - u)$ and $\nu_N(\rho_m-\rho)$ tend to zero as $m\rightarrow \infty$.
Using Bernstein's inequality, we have
\begin{equation*}
\norm{\sum_{j=-1}^N \norm{\Delta_j (\omega_m-\omega)}_\infty}_{\mathrm{L}^\infty ([0,T])} \leq C 2^N \nu_N(u_m-u),
\end{equation*}
and similarly for $\nabla \rho_m$. Working with $\nabla \rho$, this yields
\begin{align}\label{eq4.13}
&\norm{\sum_{j=-1}^N \norm{\Delta_j \nabla \rho_m}_\infty - \norm{\Delta_j \nabla \rho}_\infty}_{\mathrm{L}^\infty ([0,T])}\\ &\nonumber \hspace{30pt}\leq \norm{\sum_{j=-1}^N \;\vline\;\norm{\Delta_j \nabla \rho_m}_\infty - \norm{\Delta_j \nabla \rho}_\infty\;\vline\;}_{\mathrm{L}^\infty ([0,T])}\\
&\nonumber\hspace{30pt}\leq \norm{\sum_{j=-1}^N \norm{\Delta_j (\nabla \rho_m - \nabla \rho)}_\infty}_{\mathrm{L}^\infty ([0,T])}\\
 &\nonumber \hspace{30pt}\leq C 2^N \nu_N(\rho_m -\rho).
\end{align}
By (\ref{eq4.5}), we have that $\sum_{j=-1}^N \norm{\Delta_j \nabla \rho_m(t)}_\infty \leq C \Gamma(N),$
where $C$ is independent of our choice of $m$. Using (\ref{eq4.13}) and the above, we have
\begin{align*}
\norm{\sum_{j=-1}^N \norm{\Delta_j \nabla \rho}_\infty}_{\mathrm{L}^\infty ([0,T])} &\leq \norm{\sum_{j=-1}^N \norm{\Delta_j \nabla \rho}_\infty - \norm{\Delta_j \nabla \rho_m}_\infty}_{\mathrm{L}^\infty ([0,T])}\\
& \hspace{20pt} + \norm{\sum_{j=-1}^N \norm{\Delta_j \nabla \rho_m}_\infty}_{\mathrm{L}^\infty ([0,T])}\\
&\leq C 2^N \nu_N(\rho_m-\rho) + C \Gamma(N),
\end{align*}
and passing to the limit as $m \rightarrow \infty$ gives the second inclusion of (\ref{eq4.12}). The bound on $\omega$ is similar (with $\Gamma_1$ in place of $\Gamma$), and is therefore omitted.

It remains to show that $(u,\rho)$ satisfy the Boussinesq equations. Note that since $\{u_m\}$ and $\{\rho_m\}$ are in fact Cauchy sequences in $C([0,T]; B_{\infty, 1}^0)$ (in addition to $\mathrm{L}^\infty([0,T]; B_{\infty, 1}^0)$), we can conclude that
\begin{equation}\label{eq4.15}
u_m \rightarrow u, \rho_m \rightarrow \rho \text{ in } \mathrm{L}^\infty(\Real^2 \times [0,T]) \cap C(\Real^2 \times [0,T]),
\end{equation}
and after choosing a subsequence, we have:
\begin{align}
\label{eq4.16}&\omega_m \stackrel{w*}{\rightharpoonup} \omega,\;\; \nabla \rho_m \stackrel{w*}{\rightharpoonup} \nabla \rho \text{ in } \mathrm{L}^\infty([0,T];\mathrm{L}^{p_0}) \cap \mathrm{L}^\infty([0,T];\mathrm{L}^{p_1}),\\
\label{eq4.17}&\dot{u}_m \stackrel{w*}{\rightharpoonup} \dot{u},\;\; \dot{\rho}_m \stackrel{w*}{\rightharpoonup} \dot{\rho} \text{ in } \mathrm{L}^\infty([0,T];\mathrm{L}^{p_0}) \cap \mathrm{L}^\infty([0,T];\mathrm{L}^{p_1}).
\end{align}
Let $\beta \in \mathcal{S}$, $\text{div } \beta = 0$ be a test function, and let $\theta \in \mathcal{D}([0,T])$. By definition of $(u_m, \rho_m)$, we have
\begin{align*}
\langle u_m(0), \beta\rangle\theta(0) &+ \int_0^T \langle u_m(\tau),\beta\rangle\dot{\theta}(\tau)\\
& + \langle u_m(\tau), (u_m(\tau), \nabla)\beta\rangle\theta(\tau) - \langle \rho_m(\tau), \beta \rangle\theta(\tau)\text{d}\tau = 0,\\
\langle \rho_m(0), \beta\rangle\theta(0) &+ \int_0^T \langle\rho_m(\tau), \beta\rangle\dot{\theta}(\tau)\\
& + \langle\rho_m(\tau), (u_m(\tau),\nabla)\beta\rangle \theta(\tau) + \kappa \langle \rho_m(\tau), \Delta \beta\rangle \theta(\tau) \text{d}\tau = 0.
\end{align*}
From (\ref{eq4.1}) and the definition of $\omega_m$, $\rho_m$, we have
\begin{align*}
&\langle u_m(0), \beta\rangle \longrightarrow \langle \mathcal{K} * f,\beta\rangle\\
&\langle \rho_m(0), \beta \rangle \longrightarrow \langle g, \beta\rangle.
\end{align*}
Sending $m\rightarrow \infty$ and utilizing (\ref{eq4.15})-(\ref{eq4.17}), we conclude that $(u, \rho)$ solve $(B_{\kappa, 0})$.

It remains to show that $\omega(\cdot), \nabla \rho(\cdot)$ are weak-* continuous with values in $B_{\Gamma_1}$ and $B_\Gamma$, respectively. Since the proofs are nearly identical up to our choice of target space, we consider only the case of $\nabla \rho$. $\{\rho_m\}$ is a Cauchy sequence in $C([0,T];B^0_{\infty, 1})$, therefore we have that
\begin{equation*}
\norm{\nabla \rho- \nabla \rho_m}_{C([0,T];B^{-1}_{\infty, 1})} \rightarrow 0 \text{ as } m\rightarrow \infty.
\end{equation*}
Fix $h \in H_\Gamma$. Consider $\pi(t) := \langle \nabla \rho(t), h\rangle$ for $t\in [0,T]$, and define $\pi_m(t)$ similarly for $\rho_m(t)$. For any $t_0 \in [0,T]$, we have
\begin{equation}\label{eq4.18}
\pi(t) - \pi(t_0) = (\pi-\pi_m)(t) - (\pi-\pi_m)(t_0) +(\pi_m(t) - \pi_m(t_0)).
\end{equation}
By (\ref{eq4.4}), we have that for fixed $m$, $\pi_m(t) - \pi_m(t_0) \rightarrow 0$ as $t\rightarrow t_0$. For any $\tilde{h} \in H_\Gamma$, we have
\begin{equation*}
\abs{(\pi-\pi_m)(t)} \leq \abs{\langle(\nabla \rho - \nabla \rho_m)(t), h-\tilde{h}\rangle} + \abs{\langle(\nabla \rho - \nabla \rho_m)(t), \tilde{h}\rangle}.
\end{equation*}
Let $\norm{\cdot}_{\Gamma'}$ be the norm of $H_{\Gamma}$, given by $\norm{f}_{\Gamma'} = \inf_{\{d_j\}} \sum_{j=-1}^\infty d_j$. Next, for arbitrary $\delta > 0$, consider the space $B^1_{1,1 + \delta^{-1}}$. Note that any Besov Space based on the $\mathrm{L}^1$-norm contains every $\mathrm{L}^1$-function with bounded Fourier spectrum (since the Littlewood-Paley decomposition of such a function has only finitely many nonzero terms), and that these functions are dense in $H_\Gamma$. Therefore $B^1_{1,1 + \delta^{-1}}$ is dense in $H_\Gamma$, and we can choose $\tilde{h} \in B^1_{1, 1+ \delta^{-1}}$ such that $\norm{h-\tilde{h}}_{\Gamma'} < \eps$ for any $\eps > 0$. Because $\norm{\nabla \rho(t)}_\Gamma$ and $\norm{\nabla \rho_m(t)}_\Gamma$ are uniformly bounded on $[0,T]$, we have that
\begin{equation}
\abs{\langle (\nabla \rho - \nabla \rho_m)(t), h -\tilde{h}\rangle} < C\eps.
\end{equation}
Finally, we use the duality $(B^1_{1,1+ \delta^{-1}})' = B^{-1}_{\infty, 1 + \delta}$ and the embedding $B^{-1}_{\infty, 1} \hookrightarrow B^{-1}_{\infty, 1 + \delta}$, to write
\begin{equation}\label{eq4.19}
\abs{\langle (\nabla \rho -\nabla \rho_m)(t), \tilde{h}\rangle} \leq C\norm{\nabla \rho - \nabla \rho_m}_{C([0,T];B^{-1}_{\infty, 1})} \norm{\tilde{h}}_{B^1_{1,1 + \delta^{-1}}}.
\end{equation}
By choosing $m$ sufficiently large, we can make the right hand side of (\ref{eq4.19}) less than $\eps$. Combined with (\ref{eq4.17}), this gives $\limsup_{t\rightarrow t_0} |\pi(t)- \pi(t_0)| \leq C\eps$, which yields the desired result for $\nabla \rho$.
\end{proof}
\begin{proof}[Proof of Theorem \ref{existthm2}]
The proof follows that of Theorem \ref{existthm1}, except that our choice of $T > 0$ is arbitrary and no longer depends on $\Gamma$. Since the proof is identical to the above except in that respect, it is omitted.
\end{proof}

\nocite{MR639462} \nocite{MR808825} \nocite{MR2413097}
\bibliographystyle{amsalpha}
\bibliography{BBB}
\end{document}